\newtheorem{theorem}{Theorem}
\newtheorem{lemma}[theorem]{Lemma}
\newtheorem{proposition}[theorem]{Proposition}
\newtheorem{corollary}[theorem]{Corollary}
\newtheorem{rem}{Remark}
\newenvironment{proof}[1][Proof]{\begin{trivlist}
\item[\hskip \labelsep {\bfseries #1}]}{\end{trivlist}}
\newenvironment{definition}[1][Definition]{\begin{trivlist}
\item[\hskip \labelsep {\bfseries #1}]}{\end{trivlist}}
\author{Thibaut Le Gouic$^{1}$ \& Jean-Michel Loubes$^{2}$ \\ \\ \'Ecole Centrale de Marseille, Institut de Mathématiques de Marseille$^{1}$ \\ Universit\'e de Toulouse, Institut de Math\'ematiques de Toulouse$^{2}$}
\title{Existence and Consistency of Wasserstein Barycenters}
\date{}
\begin{document}
\maketitle

\begin{abstract} 
Based on the Fr\'{e}chet mean, we define a notion of {\it barycenter} corresponding to a usual notion of {\it statistical mean}.
We prove  the existence of Wasserstein barycenters of random probabilities defined on a geodesic space $(E,d)$.
We also prove the consistency of this barycenter in a general setting, that includes taking barycenters of empirical versions of the probability measures or of a growing set of probability measures.
\end{abstract}

{\footnotesize \noindent\emph{Keywords:} \textbf{ Fr\'echet mean, Wasserstein distance, Barycenter, Existence, Consistency} }

\section*{Introduction}

Giving a sense to the  notion of the {\it mean} of a data sample  is one of the major activities of statisticians. When dealing with complex variable data which do not possess an Euclidean structure, the mere issue of defining the mean  becomes a difficult task. This problem arises naturally for a wide range of statistical research fields such as functional data analysis for instance in \cite{Gamboa-Loubes-Maza-07}, \cite{MR2168993}, \cite{2011arXiv1101.0736B}  and references therein, image analysis in \cite{TrouveY05}  or \cite{JASA}, shape analysis in \cite{kendall} or \cite{bb206} with many applications ranging from biology in \cite{Bolstad-03} to pattern recognition \cite{Sakoe-Chiba-78} just to name a few.\vskip .1in
When dealing with data that are probability measures, the issue of finding a central probability measure that will convey the information of the whole data is a difficult task.
This has been tackled in~\cite{agueh2011barycenters} by considering a notion of barycenter  with respect to  the Wasserstein distance. This notion coincides with the notion of Fr\'echet mean. That is, the Fr\'echet mean of the points $(x_i)_{1\leq i \leq n}$ of a geodesic space $(E,d)$ given weights $(\lambda_i)_{1\leq i \leq n}$ is defined as a minimizer of 
\[
x\mapsto \sum_{i=1}^n \lambda_{i}d^2(x,x_i).
\]
This definition provides a natural extension of the barycenter as it coincides on $\mathbb{R}^d$ with the barycenter $\sum_{i=1}^n\lambda_i x_i$ of the points $(x_i)_{1\leq i \leq n}$, with weights $(\lambda_i)_{1\leq i \leq n}$.
This function to be minimized can be rewritten as
\[
x\mapsto \mathbb{E}d^2(X,x)
\]
if the distribution of the random variable $X$  is the discrete measure
\[
\mu = \sum_{i=1}^n \lambda_i \delta_{x_i},
\] where $\delta$ denotes the Dirac measure.
We will call any of these minimizer a barycenter of $\mu$, so that the Fr\'echet mean of $(x_i)_{1\leq i\leq n}$ with weights $(\lambda_i)_{1\leq i\leq n}$ is the barycenter of $\mu$.
There is then a natural extension of a barycenter for a probability measure, that is: a point $x$ is said so be a barycenter of a measure $\mu$ (not necessarily finitely supported) if it minimizes
\[
x\mapsto \mathbb{E}d^2(X,x)
\]
when the distribution of the random variable $X$ is $\mu$.
The first question to arise is whether this barycenter exists.

When $(E,d)$ is assumed to be a locally compact geodesic space, Hopf-Rinow theorem states that balls are compact and thus, the existence of this barycenter is straightforward.
But it is not obvious in more general cases. \vskip .1in

In this paper, we consider barycenters in the Wasserstein space of a locally compact geodesic space.
Since the Wasserstein space of a locally compact space is, in general, not locally compact, its existence is not as straightforward.
However, in this setting, the Wasserstein space is a geodesic space of probability measures.
The first goal of this paper is to prove existence of barycenters in this setting.

Given $p\geq 1$, and denoting $W$ the Wasserstein metric, previous work in this direction consider the barycenter of the probability measures $(\mu_i)_{1\leq i \leq n}$  with weights $(\lambda_i)_{1\leq i \leq n}$, i.e. a minimizer of the following criterion
\[
\nu \mapsto \sum_{i=1}^n \lambda_i W_p^p(\nu,\mu_i),
\]
which is thus also the barycenter of the atomic probability $\mathbb{P}$ on the Wasserstein space, defined by
\[
\mathbb{P}=\sum_{i=1}^n\lambda_i \delta_{\mu_i},
\]
An important result proved in~\cite{agueh2011barycenters}, is the existence and uniqueness of this minimizer when the underlying space $(E,d)$ is the Euclidean space $\mathbb{R}^d$ and $p=2$.
Uniqueness requires the extra assumption that at least one of the $\mu_i$'s \textit{vanishes on small sets}.
This \textit{vanishing} property means that the considered measures give probability $0$ to sets with Hausdorff dimension less than $d-1$.
In particular, any measure absolutely continuous with respect to the Lebesgue measure \textit{vanishes on small sets}.
This work of \cite{agueh2011barycenters} has been extended in~\cite{2014arXiv1412.7726K} to compact Riemannian manifolds, with the condition to \textit{vanish on small sets} being replaced by absolute continuity with respect to the volume measure.  
Since the Wasserstein space of a compact space is also compact, the existence of the barycenter in this setting is straightforward, but their work provides, among other results, an interesting extension of the work of \cite{agueh2011barycenters}, by showing a dual problem called the \textit{multimarginal} problem, for any $\mathbb{P}$ of the form $\sum_{i=1}^n \lambda_i \delta_{\mu_i}.$
The same dual problem has been used in a previous work to show existence of barycenter whenever there exists a Borel (not necessarily unique) barycenter application on $(E^n,d^n)$ that associate the barycenter of $\sum_{i=1}^n\lambda_i \delta_{\mu_i}$ to every $n$-uplets $(x_1,...,x_n)$.
This assumption is actually always verified on locally compact geodesic spaces.
This is the result of Lemma~\ref{lem:borelbarycenter}.
It is a first step toward the proof of existence of barycenter for any $\mathbb{P}$.  \vskip .1in

This paper studies, in the setting of locally compact geodesic spaces, the existence of the barycenter and state consistency properties. In a previous work \cite{Bernoulli} or \cite{these}, the authors studied  some asymptotic results giving conditions under which a sequence of barycenters of discrete measures converging to a limit measure can be understood as a barycenter of the limit probability measure.
This result enables to define the barycenter of empirical measures and study its asymptotic behavior.
In the following, we propose an improved version of this limit theorem that enables to prove existence of barycenters of probabilities in a our more general framework.   \vskip .1in

This paper falls into the following parts.
Section~\ref{s:def} presents general definitions and states a general theorem that ensures the existence of a barycenter of probability measures in the Wasserstein space.
In Section~\ref{s:consistency}, a consistency result is proven. Section~\ref{s:appli} is devoted to some statistical applications.
The technical lemmas are presented in Section~\ref{s:lemma} while the detailed proofs are postponed to Section~\ref{s:append}.

\section{Barycenter of a probability in Wasserstein space} \label{s:def}

Given two points $x,y$ in a metric space $(E,d)$, their \emph{mid-point} is the point $z\in E$ such that 
\[
d(x,z)=d(z,y)=\frac{1}{2}d(x,y).
\]
\begin{definition}[Definition (Geodesic space)]
A space $(E,d)$ is called a \emph{geodesic space} if
\begin{itemize}
\item $(E,d)$ is a \emph{complete metric} space and
\item every two points $x,y\in E$ have a mid-point $z\in E$.
\end{itemize}
\end{definition}
Note that in this case, the mid-point of $x$ and $y$ is the 2-barycenter of $x$ and $y$ with weights $(\frac{1}{2},\frac{1}{2})$.

\begin{rem}
Such spaces are sometimes called \emph{complete intrinsic metric length spaces}  (see for instance \cite{bbi}).

Given a continuous path $\gamma:[0,T]\rightarrow E$, its length is defined as 
\[
\Lambda(\gamma)=\sup \left\{\sum_{i=0}^n d\left(\gamma(t_{i+1}),\gamma(t_i)\right);0=t_0\leq t_1 \leq ... \leq t_n=T\right\}.
\]
Thus, a continuous path is said to be a \emph{geodesic}, if for any interval $[a,b]\subset [0,T]$, the length of $\gamma$ restricted to $[a,b]$ is $d\left(\gamma(a),\gamma(b)\right)$:
\[
\Lambda(\gamma_{|[a,b]})=d\left(\gamma(a),\gamma(b)\right).
\]

It is known (see theorem 2.4.16 p.42 and lemma 2.4.8 p.41 in \cite{bbi}) that a (separable) complete metric space is geodesic if and only if for every pair $(x,y)$ there exists a geodesic joining $x$ and $y$.
\end{rem}

\begin{definition}[Definition (Barycenter)]
Set $p\geq1$ and let $(E,d)$ be a geodesic space and $\mu$ a probability measure on $(E,d)$ such that 
\begin{equation}\label{eq:Pp}
\int d^p(x,x_0)d\mu(x)<\infty
\end{equation}
for some (and thus any) $x_0\in E$.
A point $x_0\in E$ is called a \emph{$p$-barycenter} of $\mu$ if 
\begin{equation}\label{eq:defbary}
\int d^p(x,x_0)d\mu(x) = \inf\left\{\int d^p(x,y)d\mu(x);y\in E\right\}.
\end{equation}
The set of all probability measures satisfying (\ref{eq:Pp}) is denoted $\mathcal{W}_p(E)$.
\end{definition}

Barycenters do not always exists.
On can find a geodesic space $(E,d)$ and a probability measure $\mu\in \mathcal{W}_p(E)$ for which there exists no barycenter.
However, the Hopf-Rinow-Cohn-Vossen theorem (see theorem 2.5.28 p. 52 in \cite{bbi}) states that, on locally compact geodesic spaces, every closed ball is compact.
Consequently, the infimum in (\ref{eq:defbary}) can be taken on a compact ball, and thus existence of a barycenter is ensured.
We thus have the following proposition.
\begin{proposition}
Set $p\geq 1$ and let $(E,d)$ be a \emph{locally compact} geodesic space and $\mu\in \mathcal{W}_p(E)$.
Then, there exists a barycenter of $\mu$.
\end{proposition}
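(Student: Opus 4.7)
The plan is the direct method: set $F(y) = \int d^p(x,y)\,d\mu(x)$, show that minimizing sequences stay in a bounded (hence, by Hopf-Rinow-Cohn-Vossen, compact) set, then extract a convergent subsequence and conclude by lower semicontinuity.

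First I would check that $F$ is finite everywhere on $E$. From the triangle inequality and the convexity bound $(a+b)^p \le 2^{p-1}(a^p+b^p)$ (valid for $p\ge1$), we have $d^p(x,y) \le 2^{p-1}\bigl(d^p(x,x_0) + d^p(x_0,y)\bigr)$, and integrating against $\mu$ combined with (\ref{eq:Pp}) yields $F(y) < \infty$ for every $y \in E$. In particular $m := \inf_E F \in [0, \infty)$, and I may pick a minimizing sequence $(y_n) \subset E$ with $F(y_n) \to m$.

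Next, the key step is to confine $(y_n)$ to a bounded set. Using the same convexity inequality in the reverse direction,
\[
d^p(x_0,y_n) \le 2^{p-1}\bigl(d^p(x_0,x) + d^p(x,y_n)\bigr),
\]
and integrating against $\mu$, I get $d^p(x_0,y_n) \le 2^{p-1}\bigl(M + F(y_n)\bigr)$, where $M = \int d^p(x,x_0)\,d\mu(x)$. Since $F(y_n)$ is eventually bounded by $m+1$, the sequence $(y_n)$ lies in a closed ball $\overline{B}(x_0,R)$ for some finite $R$. By the Hopf-Rinow-Cohn-Vossen theorem invoked in the excerpt, this ball is compact in the locally compact geodesic space $(E,d)$, so I can extract a subsequence $y_{n_k} \to y^\star \in E$.

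Finally, since $x \mapsto d^p(x, y_{n_k})$ converges pointwise to $x \mapsto d^p(x, y^\star)$ by continuity of the distance and all quantities are nonnegative, Fatou's lemma gives
\[
F(y^\star) \le \liminf_{k\to\infty} F(y_{n_k}) = m,
\]
so $y^\star$ realizes the infimum and is a barycenter of $\mu$. The only genuine obstacle is the bounded-minimizing-sequence step, which hinges on the $L^p$ convexity bound being usable in both directions together with the finite-moment assumption (\ref{eq:Pp}); once boundedness is established, the rest is the standard direct-method pattern combined with Hopf-Rinow-Cohn-Vossen.
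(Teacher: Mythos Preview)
Your proof is correct and follows essentially the same approach as the paper: the paper simply remarks that, by Hopf--Rinow--Cohn-Vossen, closed balls are compact so the infimum may be taken over a compact ball, and you spell out the details of this direct-method argument (boundedness of minimizing sequences via the moment condition, extraction by compactness, and lower semicontinuity via Fatou). There is no substantive difference in strategy, only in the level of detail.
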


Metric spaces of nonpositive curvature (NPC spaces) provide another setting for which barycenters exist.
We recall the definition of such spaces following \cite{sturm2003}.

\begin{definition}[Definition (NPC Spaces)]
A complete metric space $(E,d)$ is called a \emph{global NPC space} if for each pair of points $x_0,x_1\in E$, there exists $y\in E$ such that for all $z\in E$,
\[
d^2(z,y)\leq \frac{1}{2}d^2(z,x_0) + \frac{1}{2} d^2(z,x_1) -\frac{1}{4} d^2(x_0,x_1).
\]
\end{definition}

Such spaces are geodesic spaces and every probability measure on such spaces that satisfies  $\int d^2(x,x_0)d\mu(x)<\infty$ for some $x_0\in E$ has a \emph{unique} $2$-barycenter (see proposition 4.3 in \cite{sturm2003}).

The goal of this paper is to study barycenters in Wasserstein spaces.
We first recall the definition of the Wasserstein space of a metric space $(E,d)$.

\begin{definition}[Definition (Wasserstein space)]
Set $p\geq 1$ and let $(E,d)$ be a metric space.
Given two measures $\mu$, $\nu$ in $\mathcal{W}_p(E)$, we denote by $\Gamma(\mu, \nu)$ the set of all
probability measures $\pi$ over the product set $E \times E$  with first, resp. second, marginal $\mu$, resp. $\nu$.
The transportation cost with cost function $d^p$ between two measures
$\mu$, $\nu$  in $\mathcal{W}_p(E)$, is defined as
\begin{equation*}
 \mathcal{T}_p(\mu, \nu) = \inf_{ \pi \in \Gamma(\mu, \nu)} \int d^p(x, y) d \pi.
\end{equation*}
The  transportation cost allows to endow the set $\mathcal{W}_p(E)$  with a metric $W_p$ defined by
\begin{equation*}
 W_p(\mu, \nu) = \mathcal{T}_p(\mu, \nu)^{1/p}.
\end{equation*}
This metric is known as the $p$-Wasserstein distance and the metric space $(\mathcal{W}_p(E),W_p)$ is called the \emph{Wassertein space} of $(E,d)$.
\end{definition}

It is well known (see theorem 6.9 of \cite{villani2008optimal} for instance, or proposition 7.1.5 in \cite{ambrosio2008}) that $W_p$ metrizes the topology of weak convergence and convergence of moments of order $p$ (i.e. $\int d^p(x,x_0)d\mu_n \rightarrow \int d^p(x,x_0)d\mu(x)$).
If $(E,d)$ is a separable complete metric space, so is $(\mathcal{W}_p(E),W_p)$ (see theorem 6.19 in \cite{villani2008optimal}).

Also, if $(E,d)$ is a locally compact geodesic space, then $(\mathcal{W}_p(E),W_p)$ is a geodesic space.
This result can be found in \cite{lott2009} (see lemma 2.4 and proposition 2.6) for the case $(E,d)$ compact, but the arguments are valid when $(E,d)$ is locally compact as well.
However, $(\mathcal{W}_p(E),W_p)$ is not locally compact unless $(E,d)$ is compact (see remark 7.1.9 in \cite{ambrosio2008}).
Thus, existence results on locally compact spaces can not be applied to prove existence of barycenter on $(\mathcal{W}(E),W_p)$.

Likewise, Wasserstein spaces are not NPC spaces in general (see theorem 7.3.2 in \cite{ambrosio2008}).
Indeed, two probability measures $\mu_0, \mu_1$ can have more than one mid-point in $(\mathcal{W}(E),W_p)$: each mid-point is a barycenter of $\frac{1}{2}\left(\delta_{\mu_0}+\delta_{\mu_1}\right)\in \mathcal{W}(\mathcal{W}(E))$.
However NPC spaces assign a \emph{unique} barycenter to every probability measure.
Therefore Wasserstein spaces can not be NPC spaces and results for such spaces can not be applied to prove existence of barycenter in Wasserstein spaces.

We want to prove existence of barycenters in Wasserstein spaces.
To that purpose, we consider a random probability measure $\tilde{\mu}$ in $\mathcal{W}_p(E)$, following a distribution $\mathbb{P}$. 
This probability $\mathbb{P}$ is chosen in the space $\mathcal{W}_p(\mathcal{W}_p(E))$ endowed with the metric $W_p$.
Note that we use the same notation for the Wasserstein distance over $\mathcal{W}_p(E)$ and $\mathcal{W}_p(\mathcal{W}_p(E))$.
Thus, if $\tilde{\mu} \in \mathcal{W}_p(E)$ is a random measure with distribution 
 $\mathbb{P}$, then for all $\nu \in \mathcal{W}_p(E)$, we can write  
\begin{equation} \label{equiv}  W^p_p(\delta_\nu,\mathbb{P}) = \mathbb{E} (W^p_p(\nu,\tilde\mu)) = \int W^p_p(\nu,\mu) d\mathbb{P}(\mu). \end{equation}

For a probability $\mathbb{P} \in \mathcal{W}_p(\mathcal{W}_p(E))$, consider a minimizer over $\nu \in \mathcal{W}_p(E)$ of 
\[
 \nu \mapsto \mathbb{E} \left[ W_p^p(\nu,\tilde\mu) \right]=W^p_p(\delta_\nu,\mathbb{P}), 
\]
where $\tilde\mu$ is a random probability of $\mathcal{W}_p(E)$ with distribution $\mathbb{P}$. If exists, this probability measure is a barycenter of $\mathbb{P}$.

We can now state existence result.

\begin{theorem}[Existence of a Wasserstein Barycenter] \label{th:bar}
Set  $p\geq 1$ and let $(E,d)$ be a separable locally compact geodesic space.
Hence, for $\mathbb{P} \in \mathcal{W}_p(\mathcal{W}_p(E))$, there exists a barycenter $\bar{\mu}_\mathbb{P}$ defined as 
\begin{equation} \label{bar}
\bar{\mu}_\mathbb{P} \in {\rm arg}\min_{ \nu \in \mathcal{W}_p(E) }  \mathbb{E}  \left[ W_p^p(\nu,\tilde\mu) \right],
\end{equation} for $\tilde \mu$ a random measure with distribution $ \mathbb{P}$.
\end{theorem}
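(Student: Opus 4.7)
The plan is to reduce to the case of a finitely supported $\mathbb{P}$ via a Borel barycenter map, and then pass to the limit using the consistency result announced for Section~\ref{s:consistency}.

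\textbf{Step 1: Approximate $\mathbb{P}$ by finitely supported measures.} Since $(E,d)$ is separable and complete, so is $(\mathcal{W}_p(E),W_p)$, and in turn so is $(\mathcal{W}_p(\mathcal{W}_p(E)),W_p)$. Because finitely supported probability measures are dense in the $p$-Wasserstein space of any separable complete metric space, there exists a sequence
\[
\mathbb{P}_n = \sum_{i=1}^{N_n} \lambda_i^{(n)} \delta_{\mu_i^{(n)}}, \qquad W_p(\mathbb{P}_n,\mathbb{P}) \to 0.
\]

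\textbf{Step 2: Construct a barycenter of each $\mathbb{P}_n$.} By Lemma~\ref{lem:borelbarycenter}, for each $n$ there is a Borel map $T_n : E^{N_n} \to E$ sending a tuple $(x_1,\ldots,x_{N_n})$ to a Fr\'echet barycenter of $\sum_i \lambda_i^{(n)} \delta_{x_i}$. Take an optimal multimarginal plan $\gamma_n$ on $E^{N_n}$ with marginals $\mu_1^{(n)},\ldots,\mu_{N_n}^{(n)}$ for the cost $(x_1,\ldots,x_{N_n}) \mapsto \sum_i \lambda_i^{(n)} d^p(x_i, T_n(x_1,\ldots,x_{N_n}))$. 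A standard multimarginal argument, as in \cite{agueh2011barycenters} and \cite{2014arXiv1412.7726K}, then shows that $\bar{\mu}_n := (T_n)_\# \gamma_n \in \mathcal{W}_p(E)$ is a barycenter of $\mathbb{P}_n$.

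\textbf{Step 3: Pass to the limit via consistency.} The optimality of each $\bar\mu_n$ and the convergence $W_p(\mathbb{P}_n,\mathbb{P})\to 0$ give, for any fixed $\nu_0 \in \mathcal{W}_p(E)$,
\[
\int W_p^p(\bar{\mu}_n,\mu)\,d\mathbb{P}_n(\mu) \;\leq\; \int W_p^p(\nu_0,\mu)\,d\mathbb{P}_n(\mu) \;\longrightarrow\; \int W_p^p(\nu_0,\mu)\,d\mathbb{P}(\mu) < \infty,
\]
so the functionals are uniformly bounded along the sequence. Invoking the consistency theorem of Section~\ref{s:consistency}, whose role is precisely to turn such bounds into relative compactness in $(\mathcal{W}_p(E),W_p)$ for minimizer sequences, yields that $(\bar{\mu}_n)$ has an accumulation point $\bar{\mu}_\mathbb{P} \in \mathcal{W}_p(E)$ and that every such accumulation point is a barycenter of $\mathbb{P}$, which is exactly the desired existence statement.

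\textbf{Main obstacle.} The delicate part is Step~3. Because $(\mathcal{W}_p(E),W_p)$ is not locally compact, a mere moment bound on $(\bar{\mu}_n)$ does not imply relative compactness: one must establish both weak tightness and uniform integrability of the $p$-th moments, and transfer them from the $W_p$-converging family $\{\mathbb{P}_n\}$ down to the barycenter sequence. This is exactly the technical content of the improved consistency result announced in the introduction, and it is what makes the whole approximation strategy work beyond the locally compact ambient setting.
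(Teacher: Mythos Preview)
Your proposal is correct and follows essentially the same three-step strategy as the paper: existence for finitely supported $\mathbb{P}$ via a Borel barycenter map and the multimarginal problem (Theorem~\ref{thm:casfini}), approximation of a general $\mathbb{P}$ by finitely supported measures (Proposition~\ref{prop2}), and passage to the limit through the consistency result (Theorem~\ref{prop}). The only cosmetic difference is that the moment bound you display in Step~3 is not needed as a separate hypothesis---Theorem~\ref{prop} only assumes $W_p(\mathbb{P}_n,\mathbb{P})\to 0$ and derives the required tightness internally---but this does not affect correctness.
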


Using the expression \eqref{equiv},  we can see that Theorem~\ref{th:bar} can be reformulated as stating the existence of the metric projection of $\mathbb{P}$ onto the subset of $\mathcal{W}_p(\mathcal{W}_p(E))$ of Dirac measures.  

\begin{proof}
The proof of Theorem~\ref{th:bar} relies on the existence of barycenters of finitely supported measures in $\mathcal{W}_p(E)$ for which the core ideas were developped in \cite{agueh2011barycenters}.
Those ideas are used for the first step of this proof.
The proof is split in three steps.
\begin{itemize}
\item  First,
consider a set of probability measures $(\mu_j), {j=1,\dots,J}$ of $\mathcal{W}_p(E)$ and assume that 
 $\mathbb{P}$ is a discrete measure defined, for positive weights $ \lambda_1,\dots, \lambda_J$  such that $\sum_{j=1}^J \lambda_j=1$, as $$\mathbb{P}=\sum_{j=1}^J \lambda_j \delta_{\mu_j}.$$ In this case 
\[
W^p_p(\delta_\nu,\mathbb{P})=\mathbb{E} W_p^p(\nu,\tilde\mu)=\sum_{j=1}^J \lambda_j W_p^p(\nu,\mu_j).
\]
Within this framework, Theorem~\ref{th:bar} reduces to an already solved problem, in the case $p=2$ in~\cite{agueh2011barycenters} or \cite{Bernoulli} and for general $p$ in~\cite{these}.
It is recalled in this paper as Theorem~\ref{thm:casfini}.
\item  To prove the theorem in the general case, we show that if there is a sequence of probability measures  $({\mathbb{P}_j})_{j\geq 1}$ converging to a limit probability measure $\mathbb{P}$  and if for each $\mathbb{P}_j$ there exists a barycenter $\bar{\mu}_{\mathbb{P}_j}$, then there exists a barycenter $\bar\mu_{\mathbb{P}}$ of the limit probability $\mathbb{P}$.
Moreover $\bar\mu_{\mathbb{P}}$ is the limit of a subsequence of the barycenters  $(\bar{\mu}_{\mathbb{P}_j} )_{j\geq 1}$.
This result is stated as Theorem~\ref{prop} in the following section.
\item Finally Proposition~\ref{prop2} concludes the proof showing that one can approximate any probability measure in $\mathcal{W}_p(\mathcal{W}_p(E))$  by finitely supported probability measures.
\end{itemize}
\end{proof}

\section{Consistency of the barycenter of a sequence of measures} \label{s:consistency}

The following theorem deals with  a continuity issue of the barycenters.
Consider a sequence $(\mathbb{P}_j)_{j\geq 1}\subset\mathcal{W}_p(E)$ converging to some $\mathbb{P}$ in $\mathcal{W}_p(\mathcal{W}_p(E))$.
If these measures all admit a barycenter, it is natural to ask whether the sequence of barycenters also converges to a barycenter of $\mathbb{P}$.
Theorem~\ref{prop} provides a positive answer.

\begin{theorem}\label{prop}
Set  $p\geq 1$ and let $(E,d)$ be a separable locally compact geodesic space.
Let $(\mathbb{P}_j)_{j \geq1} \subset \mathcal{W}_p(\mathcal{W}_p(E))$ be a sequence of probability measures on  $\mathcal{W}_p(E)$ and set $\mu_j$ a barycenter of $\mathbb{P}_j$, for all $j\in\mathbb{N}$. 
Suppose that for some $\mathbb{P}  \in \mathcal{W}_p(\mathcal{W}_p(E)) $, we have that $W_p(\mathbb{P},\mathbb{P}_j)\stackrel{j \rightarrow + \infty}{\longrightarrow} 0$. Then, the sequence $(\mu_j)_{j\geq 1}$ is precompact in $\mathcal{W}_p(E)$ and any limit is a barycenter of $\mathbb{P}$.
\end{theorem}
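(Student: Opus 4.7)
The plan is a $\Gamma$-convergence scheme: first show that $(\mu_j)$ is precompact in $\mathcal{W}_p(E)$, then identify every limit point as a barycenter of $\mathbb{P}$ by passing to the limit in the variational characterization. I expect precompactness to be the main obstacle, because $\mathcal{W}_p(E)$ is \emph{not} locally compact, so a mere $W_p$-bound on the $\mu_j$'s will not suffice: one needs genuine tightness and uniform integrability of $p$-moments at the level of $E$.

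I would first extract a uniform cost bound. Fixing $x_0 \in E$ and testing the minimizing property of $\mu_j$ at $\nu = \delta_{x_0}$ yields
\[
W_p^p(\delta_{\mu_j}, \mathbb{P}_j) \leq \int W_p^p(\delta_{x_0}, \mu) d\mathbb{P}_j(\mu) = W_p^p(\delta_{\delta_{x_0}}, \mathbb{P}_j),
\]
whose right-hand side converges to $W_p^p(\delta_{\delta_{x_0}}, \mathbb{P}) < \infty$ by the $\mathcal{W}_p$-convergence $\mathbb{P}_j \to \mathbb{P}$. A triangle-plus-Jensen argument then gives $W_p(\delta_{x_0}, \mu_j) \leq 2 W_p(\delta_{\delta_{x_0}}, \mathbb{P}_j)$, uniformly bounded in $j$.

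For precompactness in $\mathcal{W}_p(E)$, I would combine this bound with the tightness and uniform $p$-integrability of $\{\mathbb{P}_j\}$ (both consequences of the $\mathcal{W}_p$-convergence) and transfer them to $\mu_j$ via optimal couplings. Given $\epsilon > 0$, choose a compact $\mathcal{K}_\epsilon \subset \mathcal{W}_p(E)$ with $\sup_j \mathbb{P}_j(\mathcal{K}_\epsilon^c) < \epsilon$; compactness of $\mathcal{K}_\epsilon$ in $\mathcal{W}_p(E)$ implies (via Prokhorov) a compact $L_\epsilon \subset E$ with $\mu(L_\epsilon^c) < \epsilon$ for all $\mu \in \mathcal{K}_\epsilon$. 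Select (measurably in $\mu$) an optimal coupling $\pi_\mu$ between $\mu_j$ and $\mu$; Chebyshev's inequality gives $\pi_\mu(d(x, y) > r) \leq W_p^p(\mu_j, \mu)/r^p$, so with $L_\epsilon^r := \{x : d(x, L_\epsilon) \leq r\}$,
\[
\mu_j(L_\epsilon^r) \geq \int \mu(L_\epsilon) d\mathbb{P}_j(\mu) - r^{-p}\int W_p^p(\mu_j, \mu) d\mathbb{P}_j(\mu) \geq 1 - 2\epsilon - Cr^{-p}.
\]
By Hopf-Rinow-Cohn-Vossen, the closed bounded set $L_\epsilon^r$ is compact, yielding tightness of $(\mu_j)$. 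An analogous splitting using $d^p(x, x_0) \leq 2^{p-1}(d^p(x, y) + d^p(y, x_0))$ along the same couplings, combined with the uniform integrability of $\mu \mapsto W_p^p(\delta_{x_0}, \mu)$ under the $\mathbb{P}_j$'s, delivers the required uniform integrability of $d^p(\cdot, x_0)$ under $\mu_j$.

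Having precompactness, let $\mu^\star$ be a subsequential limit $\mu_{j_k} \to \mu^\star$ in $\mathcal{W}_p(E)$. The functional $(\nu, \mathbb{Q}) \mapsto W_p(\delta_\nu, \mathbb{Q})$ is $1$-Lipschitz in each argument, so jointly continuous; hence $W_p(\delta_{\mu_{j_k}}, \mathbb{P}_{j_k}) \to W_p(\delta_{\mu^\star}, \mathbb{P})$. For any fixed $\nu_0 \in \mathcal{W}_p(E)$, the minimizing property also gives $W_p(\delta_{\mu_{j_k}}, \mathbb{P}_{j_k}) \leq W_p(\delta_{\nu_0}, \mathbb{P}_{j_k}) \to W_p(\delta_{\nu_0}, \mathbb{P})$; passing to the limit and infimizing over $\nu_0$ yields $W_p(\delta_{\mu^\star}, \mathbb{P}) = \inf_\nu W_p(\delta_\nu, \mathbb{P})$, so $\mu^\star$ is a barycenter of $\mathbb{P}$.
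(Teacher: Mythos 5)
Your bound, your tightness argument, and your final Lipschitz identification step are all sound, but there is a genuine gap at the hinge of your plan: the claim that uniform $p$-integrability of $d^p(\cdot,x_0)$ under the $\mu_j$'s follows from the stated ingredients. Run your splitting: with $\pi_\mu$ an optimal coupling of $(\mu_j,\mu)$, the tail $\int_{\{d(x,x_0)>2R\}} d^p(x,x_0)\,d\pi_\mu$ is controlled by $\int_{\{d(x,y)>R\}} d^p(x,y)\,d\pi_\mu$ plus a term depending only on $\mu$. The second term is handled by the uniform integrability of $\mu\mapsto W_p^p(\delta_{x_0},\mu)$ under $\{\mathbb{P}_j\}$, fine; but the first term, averaged over $\mathbb{P}_j$, is a tail of the transport cost $W_p^p(\delta_{\mu_j},\mathbb{P}_j)$, which is only \emph{bounded} (by $M^p$), not small — unlike the coupling between $\mathbb{P}_j$ and the fixed limit $\mathbb{P}$, whose total cost vanishes. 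Nothing forces this tail to vanish as $R\to\infty$ uniformly in $j$. Concretely, take $E=\mathbb{R}$, $\mathbb{P}_j\equiv\frac12(\delta_{\delta_{-1}}+\delta_{\delta_{1}})$ and $\mu_j=(1-\tfrac1j)\delta_0+\tfrac1j\delta_{z_j}$ with $|z_j|^p=j$: every hypothesis you invoke in that step holds (uniform cost bound, tightness and uniform $p$-integrability of the $\mathbb{P}_j$'s, tightness of the $\mu_j$'s), yet $(\mu_j)$ is tight but not $W_p$-precompact. Such $\mu_j$ are of course not barycenters, so the theorem survives — but your argument never re-invokes minimality at this point, so it cannot rule this configuration out. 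The missing idea is that $W_p$-precompactness genuinely requires a second, quantitative use of the barycenter property, not just the one-shot bound $W_p(\delta_{\mu_j},\mathbb{P}_j)\leq 2W_p(\delta_{\delta_{x_0}},\mathbb{P}_j)$.

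This is precisely why the paper reverses your order of operations. It first gets only \emph{weak} subsequential convergence (Markov's inequality plus compactness of balls — simpler than your coupling transfer, which is nonetheless correct), then identifies any weak limit $\mu$ as a barycenter using Skorokhod's representation, Fatou, and lower semicontinuity of $W_p$ under weak convergence (your joint-Lipschitz identification is cleaner, but it is only available once $W_p$-convergence is known, which is exactly what is missing). Only \emph{then} does it upgrade to $W_p$-convergence: taking $\nu=\mu$ forces equality throughout the chain \eqref{eq:egalite}, yielding $W_p(\delta_{\mu_j},\mathbb{P})\to W_p(\delta_\mu,\mathbb{P})$ and hence $W_p(\mu_j,\nu)\to W_p(\mu,\nu)$ along a subsequence for some fixed $\nu$; Lemma~\ref{lem:cvW} (weak convergence plus convergence of the distance to a single reference measure implies $W_p$-convergence, proved via gluing and uniform integrability) then closes the argument. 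To repair your proposal, keep your steps 1 and 2 for tightness, replace the Lipschitz identification by the lower-semicontinuity argument valid under weak convergence, and extract the uniform integrability \emph{a posteriori} from the equality case of the minimization, as in the paper's step 3.
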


\begin{proof}[Sketch of proof]
The proof of Theorem~\ref{prop} can be split into three steps.
\begin{itemize}
\item The first step shows that the sequence of barycenters  $(\mu_j)_{j\geq 1}$ is tight. It is a consequence of the fact that balls on $(E,d)$ are compact together with  Markov's inequality  applied to these balls.

\item The second step uses Skorokhod representation theorem and lower semicontinuity of $\nu\mapsto W_p(\mu,\nu)$ for any $\mu$, to show that any weak limit of the sequence $(\mu_j)_{j\geq 1}$ is a barycenter of $\mathbb{P}$.

\item The final step proves that the convergence of the $(\mu_j)_{j\geq 1}$ actually holds in $\mathcal{W}_p(E)$.
\end{itemize}
\end{proof}

Applying this result to a constant sequence gives the following corollary.

\begin{corollary}
The set of all barycenters of a given measure $\mathbb{P}\in\mathcal{W}_p(\mathcal{W}_p(E))$ is compact.
\end{corollary}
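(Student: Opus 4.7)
The plan is to reduce the statement to Theorem~\ref{prop} by feeding it a constant sequence. Let $B(\mathbb{P})$ denote the set of barycenters of $\mathbb{P}$. To show $B(\mathbb{P})$ is compact in the metric space $(\mathcal{W}_p(E), W_p)$, it suffices to show sequential compactness: every sequence in $B(\mathbb{P})$ admits a subsequence converging to an element of $B(\mathbb{P})$.

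Concretely, I would fix an arbitrary sequence $(\mu_j)_{j \geq 1} \subset B(\mathbb{P})$ and set $\mathbb{P}_j := \mathbb{P}$ for every $j$. Then trivially $W_p(\mathbb{P}_j, \mathbb{P}) = 0 \to 0$ as $j \to \infty$, and by assumption each $\mu_j$ is a barycenter of $\mathbb{P}_j$. Theorem~\ref{prop} then directly applies, yielding two conclusions: first, that $(\mu_j)_{j \geq 1}$ is precompact in $\mathcal{W}_p(E)$, so there is a subsequence $(\mu_{j_k})_{k \geq 1}$ converging to some $\mu \in \mathcal{W}_p(E)$; second, that any such limit $\mu$ is a barycenter of $\mathbb{P}$, i.e.\ $\mu \in B(\mathbb{P})$.

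Combining these, every sequence in $B(\mathbb{P})$ has a subsequence converging inside $B(\mathbb{P})$, which is precisely sequential compactness of $B(\mathbb{P})$, hence compactness since $\mathcal{W}_p(E)$ is a metric space. There is essentially no obstacle: the whole content is packaged in Theorem~\ref{prop}, and the only thing to observe is that applying it to a constant sequence simultaneously delivers both the relative compactness of $B(\mathbb{P})$ and its closedness, which together are equivalent to compactness.
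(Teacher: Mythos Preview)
Your proposal is correct and follows exactly the paper's approach: the corollary is stated immediately after Theorem~\ref{prop} with the one-line justification ``Applying this result to a constant sequence gives the following corollary.'' Your write-up simply unpacks that sentence, and there is nothing to add.
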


An interesting and immediate corollary follows from the assumption that $\mathbb{P}$ has a unique barycenter.

\begin{corollary}
Suppose $\mathbb{P}\in\mathcal{W}_p(\mathcal{W}_p(E))$ has a unique barycenter. Then for any sequence $(\mathbb{P}_j)_{j\geq 1}\subset\mathcal{W}_p(\mathcal{W}_p(E))$ converging to $\mathbb{P}$, any sequence $(\mu_j)_{j\geq 1}$ of their barycenters converges to the barycenter of $\mathbb{P}$.
\end{corollary}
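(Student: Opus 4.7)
The plan is to apply Theorem~\ref{prop} and then upgrade subsequential convergence to convergence of the whole sequence using the uniqueness hypothesis and the standard metric-space subsequence principle. First, I would invoke Theorem~\ref{prop} directly: since $W_p(\mathbb{P},\mathbb{P}_j)\to 0$ and each $\mu_j$ is a barycenter of $\mathbb{P}_j$, the sequence $(\mu_j)_{j\geq 1}$ is precompact in $\mathcal{W}_p(E)$ and every accumulation point is a barycenter of $\mathbb{P}$. By the uniqueness assumption, $\mathbb{P}$ has exactly one barycenter, which I denote $\bar\mu_{\mathbb{P}}$, so every convergent subsequence of $(\mu_j)_{j\geq 1}$ must have limit $\bar\mu_{\mathbb{P}}$.

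Next, to conclude that the full sequence $(\mu_j)_{j\geq 1}$ converges to $\bar\mu_{\mathbb{P}}$, I would use the standard fact that in a metric space, a sequence converges to a point $x$ if and only if every subsequence admits a further subsequence converging to $x$. Given any subsequence $(\mu_{j_k})_{k\geq 1}$, the associated distributions $(\mathbb{P}_{j_k})_{k\geq 1}$ still converge to $\mathbb{P}$ in $\mathcal{W}_p(\mathcal{W}_p(E))$, so Theorem~\ref{prop} applied to this subsequence yields precompactness and hence a further subsequence converging in $\mathcal{W}_p(E)$ to a barycenter of $\mathbb{P}$, which by uniqueness must be $\bar\mu_{\mathbb{P}}$. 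This gives the desired convergence of the original sequence.

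Since the statement is an immediate corollary of Theorem~\ref{prop} combined with the uniqueness hypothesis, there is no real obstacle: the only conceptual point is that Theorem~\ref{prop} by itself only provides precompactness and subsequential limits in the barycenter set, and the uniqueness hypothesis is precisely what collapses this set to a single point, which together with the subsequence principle forces convergence of the whole sequence.
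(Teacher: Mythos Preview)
Your proposal is correct and matches the paper's treatment: the paper states this result as an ``immediate corollary'' of Theorem~\ref{prop} without giving a separate proof, and your argument---precompactness from Theorem~\ref{prop}, uniqueness collapsing the limit set to a single point, then the standard subsequence principle---is precisely the implicit reasoning behind that word ``immediate.''
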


On $E=\mathbb{R}^d$ and $p=2$, there exists a simple condition under which the barycenter is unique.

\begin{proposition}
Let $\mathbb{P}\in\mathcal{W}_2(\mathcal{W}_2(\mathbb{R}^d))$ such that there exists a set $A\subset\mathcal{P}_2(\mathbb{R}^d)$ of measures such that for all $\mu\in A$,
\begin{equation}\label{eq:L}
B\in\mathcal{B}(\mathbb{R}^d), \dim(B)\leq d-1 \implies \mu(B)=0,
\end{equation}
and $\mathbb{P}(A)>0$, then, $\mathbb{P}$ admits a unique barycenter.

Therefore, for any sequence $(\mathbb{P}_j)_{j\geq 1}$ converging to $\mathbb{P}$ in $\mathcal{W}_2\mathcal{W}_2(\mathbb{R}^d))$, the barycenters of $\mathbb{P}_j$ converge to the barycenter of $\mathbb{P}$.
\end{proposition}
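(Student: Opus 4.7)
The plan is to establish uniqueness by a \emph{linear} (rather than geodesic) convexity argument on $\mathcal{W}_2(\mathbb{R}^d)$, after which the convergence statement follows directly from the corollary stated just above. Suppose, for contradiction, that $\mathbb{P}$ admits two distinct barycenters $\nu_0\neq\nu_1$, both minimizing
\[F(\nu):=\int W_2^2(\nu,\mu)\,d\mathbb{P}(\mu).\]
Consider the linear mixture $\nu_{1/2}:=\tfrac{1}{2}(\nu_0+\nu_1)\in\mathcal{W}_2(\mathbb{R}^d)$. I will show $F(\nu_{1/2})<F(\nu_0)$, which is the required contradiction.

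For each $\mu\in\mathcal{W}_2(\mathbb{R}^d)$, let $\pi_0\in\Gamma(\mu,\nu_0)$ and $\pi_1\in\Gamma(\mu,\nu_1)$ be optimal transport plans. The plan $\tfrac{1}{2}(\pi_0+\pi_1)$ has marginals $\mu$ and $\nu_{1/2}$, so
\[W_2^2(\nu_{1/2},\mu)\;\leq\;\tfrac{1}{2}\bigl(W_2^2(\nu_0,\mu)+W_2^2(\nu_1,\mu)\bigr). \qquad(\ast)\]
The crucial step is to show that $(\ast)$ is \emph{strict} whenever $\mu\in A$. Indeed, Brenier's theorem applies since $\mu$ vanishes on small sets, so the optimal plans from $\mu$ take the Monge form $\pi_i=(\mathrm{id},T_i)_{\#}\mu$ for $i=0,1$, and the optimal plan from $\mu$ to $\nu_{1/2}$ is \emph{uniquely} induced by a transport map. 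Since $\nu_0\neq\nu_1$, we must have $T_0\neq T_1$ on a set of positive $\mu$-mass (else $(T_0)_{\#}\mu=(T_1)_{\#}\mu$). Consequently the disintegration of $\tfrac{1}{2}(\pi_0+\pi_1)$ with respect to $\mu$ is the non-degenerate mixture $\tfrac{1}{2}(\delta_{T_0(x)}+\delta_{T_1(x)})$, which fails to be a Dirac on a set of positive $\mu$-mass. Hence $\tfrac{1}{2}(\pi_0+\pi_1)$ cannot be the unique Brenier optimizer, and its cost strictly exceeds $W_2^2(\nu_{1/2},\mu)$.

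Integrating $(\ast)$ against $d\mathbb{P}$, using the strict inequality on $A$ (which has $\mathbb{P}(A)>0$) and the non-strict one elsewhere, yields
\[F(\nu_{1/2})\;<\;\tfrac{1}{2}\bigl(F(\nu_0)+F(\nu_1)\bigr)\;=\;F(\nu_0),\]
the sought contradiction. Uniqueness is thus proven, and the second assertion of the proposition is immediate from the preceding corollary. The only technical point requiring care is the strict inequality in $(\ast)$, which reduces to the well-known uniqueness of the Brenier map on $\mathbb{R}^d$ for targets whose source vanishes on small sets; no further obstacle is expected.
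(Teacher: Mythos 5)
Your proof is correct and follows essentially the same route as the paper: the paper's argument is precisely the strict convexity of $\nu\mapsto W_2^2(\nu,\mu)$ along linear mixtures when $\mu$ vanishes on small sets, combined with $\mathbb{P}(A)>0$ and the preceding corollary. The only difference is that you prove this strict convexity in detail (via uniqueness of the Brenier--McCann optimal plan and the non-Monge structure of $\tfrac{1}{2}(\pi_0+\pi_1)$), whereas the paper simply cites it as known.
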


\begin{proof}
It is a consequence of the fact that if $\nu$ satisfies (\ref{eq:L}), then $\mu\mapsto W_2(\mu,\nu)$ is strictly convex and thus, so is $\mu\mapsto \mathbb{E}W_2^2(\mu,\tilde\mu)$.
\end{proof}

\section{Statistical applications} \label{s:appli}

\subsection*{Two statistical frameworks}

When confronted to the statistical analysis of a collection of probability measures in $\mathcal{W}_p(E)$, $\mu_1,\dots,\mu_J$, it is natural to define a notion of variability as 
\[
{\rm V}_J(\mu_1,\dots,\mu_J)=\inf_{ \nu \in\mathcal{W}_p(E) } \frac{1}{J} \sum_{j=1}^J W_p^p(\nu,\mu_j).
\]
This quantity plays the  role of a variance which measures the spread, with respect to the Wasserstein distance, of the measures around a point which is the Wasserstein barycenter. In this work, we extend this definition to match the notion of variance by defining 
\[
{\rm V}(\mu)=\inf_{ \nu \in\mathcal{W}_p(E) } \mathbf{E} \left( W_p^p(\nu,\tilde\mu) \right),
\]
where $\tilde\mu$ is a random probability measure in $\mathcal{W}(E)$.
We provide some condition that ensures that this quantity is well defined and is achieved for a measure $\bar{\mu}_{\mathbb{P}}$, which plays the role of the \emph{mean} of the random measure $\tilde\mu$.
Moreover, statistical inference in this setting has been tackled in  two different frameworks whether the number of probabilities goes to infinity or whether the probabilities are not observed directly but through empirical samples.
Theorem~\ref{prop} handles both of these settings. \\
\indent The first point of view concerns the case where the distribution $\mathbb{P} \in \mathcal{W}_p(\mathcal{W}_p(E))$ is approximated by a growing discrete distribution $\mathbb{P}_J$ supported on $J$ elements, with $J$ growing to infinity.
Consider a collection of measures $\mu_j\in\mathcal{W}_p(E)$ for $j \geq 1$, and weights $\lambda^J_j  \geq 0$, and define the sequence of measures  $\mathbb{P}_J, J \geq 1$ as follows
\[
\mathbb{P}_J=\sum_{i=1}^J\lambda_i^J\delta_{\mu_j}.
\]
Assume that $\mathbb{P}_J$ converges to some measure $\mathbb{P}$ with respect to Wasserstein distance.
Hence Theorem~\ref{prop} states that the barycenter (or any barycenter if not unique) of $\mathbb{P}_J$ converges to the barycenter of $\mathbb{P}$ (provided $\mathbb{P}$ has a unique barycenter). \\
\indent  The second  asymptotic point of view deals with the case where the measures $\mu_j$ are unknown but approximated by a sequence of measures $\mu_j^n$ converging with respect to the Wasserstein distance to measures $\mu_j$ when $n$ grows to infinity. Compared to the first framework, the number of measures here is fixed but only an estimation of the measures is known. This covers the interesting  case where we observe i.i.d sample $X_{i,j}$ with $i=1,\dots,n$ with distribution $\mu_j \in \mathcal{W}_p(E)$. Here $\mu_j^n =\frac{1}{n} \sum_{i=1}^n \delta_{X_{i,j}}$ is the empirical measure.
Given positive weights $(\lambda_i)_{1\leq i\leq J}$ (or a sequence of weights converging to them) the issue is whether the barycenter of the observed measure $\sum_{j=1}^J\lambda_j \delta_{\mu_j^n}$ converges to the barycenter of the limit $\sum_{j=1}^J\lambda_j \delta_{\mu_j}$ in the case where this barycenter is unique. This problem has been answered positively in \cite{Bernoulli}, up to extracting a subsequence, since the barycenter is not unique. Within this framework, set
\[
\mathbb{P}_n=\sum_{j=1}^J\lambda_j\delta_{\mu_j^n}
\]
with positive weights $\lambda_j$ and measures $(\mu_j^n)_{1\leq j\leq J,n\geq 1}\subset\mathcal{W}_p(E)^J$ converging to some limit measures $(\mu_j)_{1\leq j \leq J}\in\mathcal{W}_p(E)^J$. Then Theorem~\ref{prop} states that the barycenter (or any if not unique) converges to the barycenter of $\sum_{j=1}^J\lambda_j\delta_{\mu_j^n}$ (if unique).\vskip .1in

\subsection*{Implications of the results}

The existence  and consistency of Wasserstein barycenters has several implications in statistics.\vskip .1in
First, the variance of a collection of measures is helpful to understand the separation between collections of probability measures. Goodness of fit testing procedures have been developed to assess similarity between two samples as in \cite{MR1704844} or \cite{Loubes2015}. The test statistics relies on the computation of the variance of the sample.  Its calculation uses its expression that involves  the computation of the Wasserstein distance of the distribution with respect to the mean of the probability, which is obtained by proving the existence of the mean distribution.
\vskip .1in

Then, one of the major application is given by   deformation models or registration issues of distributions. In these problems, one assume that an unknown  template distribution $\mu$ is warped from different observations by a random deformation process. The goal is here to estimate the template using the observations. 
More precisely, the probability measured $\mu_j$ are warped from the template by a random center deformation operator $T$ with realizations $T_j$, such that $$\mu_j= {T_j}_{\#} \mu = \mu \circ T_j^{-1}.$$
Then the barycenter of the $\mu_j$'s is a proper estimate for the unobserved template. In a previous work~\cite{Bernoulli}, a similar result has been proved under a more restrictive assumption on the $\mu_j$'s: this result was proven in the case when $E=\mathbb{R}^d$ and the $(\mu_j)_{j\geq 1}$ are \textit{admissible deformations} in the sense that they can be written as the pushforward of a common probability measure $\mu$ by the gradient of a convex function.
This setting has also been considered in \cite{MR3306430}.  In~\cite{2012arXiv1212.2562B} this problem is also tackled in the particular case where the $(\mu_j)_{j\geq 1}$ have  compact support, are absolutely continuous with respect to the Lebesgue measure and  are indexed on a compact set $\Theta$ of $\mathbb{R}^d$. They state more precisely that given a probability measure on $\Theta$, one can induce a probability measure $\mathbb{P}$ on $\mathcal{W}_p(\mathbb{R}^d)$, and if the $(\mu_j)_{j\geq 1}$ are chosen randomly under $\mathbb{P}^{\otimes \infty}$, the (unique) barycenter of $\frac{1}{J}\sum_{j=1}\delta_{\mu_j}$ converges to the barycenter of $\mathbb{P}$, $\mathbb{P}$-almost surely.
In our case, we handle the general case where the family of deformations is a random function which induces a random distribution $\mu_j$ with distribution $\mathbb{P}$ given by the law of the deformation, which enables to consider general random deformation models.
Natural applications in biology arise when dealing with gene expressions that suffer from a huge variability due to the different ways of processing the data.
The first task preliminary to any analysis is a normalization procedure to extract a mean feature which corresponds to the mean distribution or the  Wasserstein barycenter as proved in~\cite{Bolstad-03} or~\cite{GALLON-2011-593476}. 
In all these cases, our result provides the existence of the {\it target} mean distribution while  the consistency results allow the barycenters to be approximated by taking the barycenter of noisy data sample.
 \vskip .1in
In a more general way, finding a way to combine complex information from several sources is a problem that is receiving a growing interest, in particular when the data  can be modeled as random distributions or samples of distributions. It is the case in Big data when we want  to  exploit  massive  data  sets  that  could  have  been collected by different units or that exceed the size to make feasible their analysis on a single machine.
Hence inference on such data sets can not be conducted using all the data, and the barycenter of the distributions is a natural candidate to resume the information conveyed by all the data.
In this framework the barycenter distribution plays the role of a consensus distribution that could represent a consensus-based global estimation or confidence region. This point of view is developed in \cite{2015arXiv151105350A} where the mean of the distribution is chosen as a representative distribution. Similar cases are considered in information fusion, where the goal is similar since it amounts to finding a mean measure that aggregates the information provided by different input measures. Hence the Wasserstein barycenter is a natural way to aggregate this information as pointed out in~\cite{bishop2014information}. In multi-target tracking, the main issue is the estimation of both the number and locations of multiple moving targets such as airplanes based on sensor measurements. In~ \cite{baum2015wasserstein} the Wasserstein barycenter provides an alternative to the MOSPA (Mean Optimal Sub-Pattern Alignment) distance. \\
\indent Finally, when considering Bayesian inference, one is faced with the problem of approximating a posterior measure.
Such approximation can be done by sampling posterior from the data. 
For large data sets, computation of such samples becomes intractable.
Thus, one can split the data into small subsets and combine the results of these local computations.
Taking the mean of the Bayesian posterior measures provides a natural way to combine these local computations as pointed out in \cite{2015arXiv150805880S}.
\vskip .1in

\section{Proofs} \label{s:append}

\begin{lemma}[Borel barycenter application]\label{lem:borelbarycenter}
Set $p\geq 1$ and let $(E,d)$ be a separable locally compact geodesic space.
Then, given any $J\in\mathbb{N}^*$ and weights $(\lambda_j)_{1\leq j \leq J}$, there exists a Borel application $T:E^J\rightarrow E$ that associate $(x_j)_{1\leq j \leq J}$ to a minimum of $x\mapsto \sum_{j=1}^J\lambda_j d^p(x,x_j)$.
Such applications will be called \emph{Borel barycenter application}.
\end{lemma}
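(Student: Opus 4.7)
The plan is to apply the Kuratowski--Ryll-Nardzewski measurable selection theorem to the multifunction
\[
F:\vec{x}=(x_1,\dots,x_J)\longmapsto \mathop{\text{argmin}}_{y\in E}\; f(y,\vec{x}),\qquad f(y,\vec{x}):=\sum_{j=1}^J\lambda_j d^p(y,x_j).
\]
For this I must check that $F$ takes nonempty closed values and that $F$ is weakly measurable, in the sense that $F^{-1}(U):=\{\vec{x}:F(\vec{x})\cap U\neq\emptyset\}$ is a Borel subset of $E^J$ for every open $U\subset E$.

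Nonemptiness is essentially the content of the existence proposition already stated in Section~\ref{s:def}: by Hopf--Rinow--Cohn--Vossen, closed balls of $(E,d)$ are compact, so restricting $y\mapsto f(y,\vec x)$ to a large enough closed ball forces the infimum to be attained. Closedness is immediate because $F(\vec x)$ is the level set $\{y:f(y,\vec x)=m(\vec x)\}$ of the continuous function $f(\cdot,\vec x)$, where $m(\vec x):=\inf_y f(y,\vec x)$. I would also verify that $m$ itself is continuous, since this will be needed later: upper semicontinuity follows from the pointwise bound $m(\vec x)\le f(y_0,\vec x)$ with $y_0\in F(\vec x_0)$ fixed, while lower semicontinuity comes from choosing minimizers $y_n\in F(\vec x_n)$, observing that each $y_n$ lies in a common compact ball (because $d(y_n,x_{j,n})^p\le m(\vec x_n)/\lambda_j$ stays bounded and $x_{j,n}\to x_j$), extracting a convergent subsequence via Hopf--Rinow, and passing to the limit by continuity of $f$.

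The main obstacle is the verification of weak measurability of $F$, since argmin sets on non-compact spaces can be awkward to handle directly. The key leverage is $\sigma$-compactness of $E$: being locally compact and second countable, every open $U\subset E$ decomposes as $U=\bigcup_{n\ge 1}K_n$ with $K_n\subset U$ compact. Hence
\[
F^{-1}(U)=\bigcup_{n\ge 1}\bigl\{\vec x : F(\vec x)\cap K_n\neq\emptyset\bigr\}=\bigcup_{n\ge 1}\bigl\{\vec x : h_{K_n}(\vec x)=m(\vec x)\bigr\},
\]
where $h_{K_n}(\vec x):=\min_{y\in K_n} f(y,\vec x)$. Joint continuity of $f$ together with compactness of $K_n$ yields that $h_{K_n}$ is continuous in $\vec x$; combined with continuity of $m$, each set $\{h_{K_n}=m\}$ is closed, so $F^{-1}(U)$ is an $F_\sigma$ set and in particular Borel. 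With the hypotheses of Kuratowski--Ryll-Nardzewski thus verified, a Borel selector $T:E^J\to E$ exists, producing the desired Borel barycenter application.
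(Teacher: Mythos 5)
Your proof is correct, and it takes a genuinely different route from the paper's. The paper disposes of the lemma in two lines: it forms the graph of the argmin correspondence,
\[
A=\Bigl\{(x_1,\dots,x_J,x)\in E^J\times E:\ \sum_{j=1}^J\lambda_j d^p(x,x_j)\le \sum_{j=1}^J\lambda_j d^p(z,x_j)\ \ \forall z\in E\Bigr\},
\]
invokes Theorem A.5 of Zimmer's book --- a ready-made selection theorem producing a Borel section of the projection $\pi_X$ restricted to a (closed) subset of a product of second countable locally compact spaces --- and composes with $\pi_Y$; closedness of $A$ (continuity of the cost) and surjectivity of the projection onto $E^J$ (Hopf--Rinow, as in the existence proposition of Section 1) are left implicit in that citation. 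You instead verify the hypotheses of Kuratowski--Ryll-Nardzewski from scratch: nonempty closed values via Hopf--Rinow, continuity of the value function $m$ by a Berge-type argument (your lower-semicontinuity step correctly traps the minimizers $y_n$ in a fixed compact ball via $d^p(y_n,x_{j,n})\le m(\vec x_n)/\lambda_j$ --- note you should pick a $j$ with $\lambda_j>0$ here, which exists for any genuine weight vector), and weak measurability by exhausting each open $U$ by compacts $K_n$, which is available precisely because local compactness plus separability makes $E$ $\sigma$-compact. The trade-off: the paper's argument is shorter but black-boxes the entire measurable-selection content into a citation tailored to locally compact spaces, whereas yours is self-contained modulo the classical KRN theorem, surfaces the two geometric inputs (compactness of closed balls, $\sigma$-compactness) explicitly, and proves slightly more along the way --- continuity of $\vec x\mapsto m(\vec x)$ and the fact that $F^{-1}(U)$ is $F_\sigma$ --- facts that could be reused elsewhere (e.g., continuity of $m$ is in the spirit of the consistency results of Section 2). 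Both proofs ultimately rest on the same hypotheses and generalize identically (to complete separable spaces with compact closed balls), so the difference is one of packaging rather than substance.
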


\begin{proof}[Proof of Lemma~\ref{lem:borelbarycenter}]
Since $(E,d)$ is locally compact, applying theorem A.5 in \cite{zimmer2013} with $X=E^J$, $Y=E$ and 
\[
A=\left\{(x_1,...,x_J,x)\in X\times Y; \sum_{j=1}^J\lambda_j d^p(x,x_j)\leq \sum_{j=1}^J\lambda_j d^p(z,x_j) \forall z\in E\right\},
\]
shows the existence of a Borel section $f$ from $\pi_X(A)$ to $X\times Y$ of the projection $\pi_X:X\times Y\rightarrow X$.
Then $T=\pi_Y\circ f$ is a Borel barycenter application - where $\pi_Y:X\times Y \rightarrow Y$ denotes the projection.
\end{proof}

\begin{theorem}[Barycenter and multi-marginal problem]\label{thm:casfini}
Let $(E,d)$ be a complete separable geodesic space, $p\geq 1$ and $J\in\mathbb{N}^*$. Given $(\mu_i)_{1 \leq i \leq J} \in \mathcal{P}_p(E)^J$ and weights $(\lambda_i)_{1 \leq i \leq J}$, there exists a measure $\gamma \in \Gamma(\mu_1, ... , \mu_J)$ minimizing
\[
\hat{\gamma} \mapsto \int \inf_{x \in E} \sum_{1 \leq i \leq J}  \lambda_i d(x_i,x)^p d\hat{\gamma}(x_1,...,x_J).
\]
Moreover, denote $T:E^J \rightarrow E$ a Borel barycenter application (as in Lemma~\ref{lem:borelbarycenter}) then the measure $\nu=T_\# \gamma$ is a barycenter of $(\mu_i)_{1 \leq i \leq J}$ and if this application is unique, any barycenter $\nu$ is of the form $\nu=T_\# \gamma$.
\end{theorem}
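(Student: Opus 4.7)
The proof splits into three parts: existence of a minimizer $\gamma$ of $F(\hat\gamma):=\int f\,d\hat\gamma$ on $\Gamma(\mu_1,\ldots,\mu_J)$, where $f(x_1,\ldots,x_J):=\inf_{x\in E}\sum_i\lambda_i d^p(x_i,x)$; verification that $\nu=T_\#\gamma$ realizes the barycenter infimum; and, assuming uniqueness of the Borel barycenter map $T$, that every barycenter arises this way.

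For the existence step I would apply the direct method of the calculus of variations. The constraint set $\Gamma(\mu_1,\ldots,\mu_J)$ is weakly precompact on the Polish space $E^J$: each marginal is tight, and choosing compacts $K_i\subset E$ with $\mu_i(K_i)\geq 1-\varepsilon/J$, every multi-coupling puts mass at least $1-\varepsilon$ on $K_1\times\cdots\times K_J$. The integrand $f$ is upper semicontinuous by plugging $x=x_1$; for lower semicontinuity, if $(x_1^n,\ldots,x_J^n)\to(x_1,\ldots,x_J)$ and $y^n$ is a near-minimizer for the $n$-th tuple, the bound $\sum_i\lambda_i d^p(x_i^n,y^n)\leq \sum_i\lambda_i d^p(x_i^n,x_1^n)$ keeps $(y^n)$ bounded, and local compactness of $(E,d)$ together with the Hopf-Rinow-Cohn-Vossen theorem yields a cluster point $y^\ast$ with $\liminf f(x_1^n,\ldots,x_J^n)\geq \sum_i\lambda_i d^p(x_i,y^\ast)\geq f(x_1,\ldots,x_J)$. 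Thus $f$ is continuous and nonnegative, so $F$ is weakly lower semicontinuous by Fatou along a Skorokhod representation, and $\inf F<\infty$ because $\int f\,d(\mu_1\otimes\cdots\otimes\mu_J)\leq \sum_i\lambda_i\int d^p(x_i,x_0)\,d\mu_i<\infty$; the direct method then produces $\gamma$.

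For the barycenter identity I would prove two matching inequalities. Choosing $\pi_i=(\mathrm{proj}_i,T)_\#\gamma\in\Gamma(\mu_i,T_\#\gamma)$ gives directly $\sum_i\lambda_i W_p^p(\mu_i,T_\#\gamma)\leq \sum_i\lambda_i\int d^p(x_i,T(x_1,\ldots,x_J))\,d\gamma=\int f\,d\gamma$. For the converse, given an arbitrary $\nu'\in\mathcal{W}_p(E)$ with optimal plans $\pi_i'\in\Gamma(\mu_i,\nu')$, I would iterate the gluing lemma along the $\nu'$-disintegrations of the $\pi_i'$ to build $\hat\gamma'\in\mathcal{P}(E^{J+1})$ whose $(i,J{+}1)$-marginal is $\pi_i'$ for each $i$; its $E^J$-projection $\gamma'$ lies in $\Gamma(\mu_1,\ldots,\mu_J)$, and the conditional-versus-infimum inequality $\int\sum_i\lambda_i d^p(x_i,x)\,d\hat\gamma'\geq \int f\,d\gamma'$ together with $\int d^p(x_i,x)\,d\hat\gamma'=W_p^p(\mu_i,\nu')$ yields $\sum_i\lambda_i W_p^p(\mu_i,\nu')\geq F(\gamma')\geq F(\gamma)$. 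Combining the two bounds shows $\nu=T_\#\gamma$ is a barycenter.

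For the last claim I would take any barycenter $\nu$, construct $\hat\gamma$ and $\gamma=\mathrm{proj}_{E^J\,\#}\hat\gamma$ by the same gluing construction, and observe that all the inequalities above become equalities. The equality $\int\sum_i\lambda_i d^p(x_i,x)\,d\hat\gamma=\int f\,d\gamma$ with pointwise $\geq$ forces, for $\gamma$-almost every $(x_1,\ldots,x_J)$, the disintegration $\hat\gamma_{(x_1,\ldots,x_J)}$ to concentrate on the argmin of $x\mapsto\sum_i\lambda_i d^p(x_i,x)$; uniqueness of $T$ then forces $\hat\gamma=(\mathrm{Id}_{E^J},T)_\#\gamma$ and hence $\nu=T_\#\gamma$. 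The main obstacle is the lower semicontinuity of $f$: this is where local compactness enters (via Hopf-Rinow, to extract the limit point $y^\ast$ of the near-minimizers), and without it one only has upper semicontinuity and the direct method fails.
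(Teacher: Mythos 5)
Your proposal reproduces the paper's architecture almost exactly: existence of the multi-marginal minimizer via tightness of $\Gamma(\mu_1,\dots,\mu_J)$ (the paper delegates this to Lemma~\ref{compacit_marginales}); the upper bound via the coupling $(\mathrm{proj}_i,T)_\#\gamma$, which is the paper's inequality~(\ref{nu_opt}); the lower bound by gluing the $\nu'$-disintegrations of the optimal plans --- your $\hat\gamma'$ is precisely the paper's measure $\theta(x,y)=\mu_1^y(x_1)\otimes\cdots\otimes\mu_J^y(x_J)\otimes\hat\nu(y)$; and the same equality analysis in~(\ref{ssi_T}) for the uniqueness clause, including the observation that the glued plan is itself a multi-marginal minimizer. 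These parts are correct.

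The one genuine flaw is in your semicontinuity analysis of $f(x_1,\dots,x_J)=\inf_{x\in E}\sum_i\lambda_i d^p(x_i,x)$, specifically the closing claim that local compactness (via Hopf--Rinow) is \emph{needed} for lower semicontinuity and that ``without it the direct method fails.'' This is false: $f$ is continuous on any metric space, and no cluster point $y^\ast$ of near-minimizers is required. Indeed, let $x^n\to x$ in $E^J$ and let $z_n$ be an $\varepsilon$-near-minimizer for $x^n$. Since $f$ is an infimum of continuous functions it is usc, hence locally bounded above, so for any index $i_0$ with $\lambda_{i_0}>0$ the bound $\lambda_{i_0}d^p(x^n_{i_0},z_n)\leq f(x^n)+\varepsilon$ keeps the $z_n$ in a fixed ball; then, writing $\delta_n=\max_i d(x_i,x^n_i)$,
\begin{equation*}
f(x)\leq \sum_i\lambda_i d^p(x_i,z_n)\leq \sum_i\lambda_i\bigl(d(x^n_i,z_n)+\delta_n\bigr)^p\leq f(x^n)+\varepsilon+p(R+\delta_n)^{p-1}\delta_n,
\end{equation*}
using $|a^p-b^p|\leq p\max(a,b)^{p-1}|a-b|$ on bounded sets; letting $n\to\infty$ and then $\varepsilon\to0$ gives lsc without ever attaining the infimum. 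The point is not pedantic: the theorem is stated for complete separable geodesic spaces, and local compactness enters only through the existence of the Borel barycenter map $T$ in Lemma~\ref{lem:borelbarycenter} (nonemptiness of the argmin plus measurable selection), not through the existence of the multi-marginal minimizer $\gamma$. As written, your existence step ties the first assertion of the theorem to a hypothesis it does not carry. A minor additional nit: ``usc by plugging $x=x_1$'' is garbled --- plugging $x=x_1$ yields finiteness and a continuous upper bound, while usc follows from $f$ being an infimum of continuous functions.
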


\begin{proof}[Proof of Theorem~\ref{thm:casfini}]
This proof is adapted from proposition 4.2 of \cite{agueh2011barycenters}.

Existence of the solution of the multi-marginal problem is a direct consequence of lemma \ref{compacit_marginales}.

Denote by $\gamma$ a solution of the multi-marginal problem and set $\nu=T_\# \gamma$.
Then, by definition of the Wasserstein distance,
\[
W_p^p (\mu_i,\nu) \leq \int d^p(x_i,T(x)) d\gamma(x).
\]
and consequently, 
\begin{equation}\label{nu_opt}
\sum_{1 \leq i \leq J} \lambda_i W_p^p (\mu_i,\nu) \leq \int \sum_{1 \leq i \leq J} \lambda_i d^p(x_i,T(x)) d\gamma(x).
\end{equation}

Also, for $\hat{\nu} \in \mathcal{P}_p(E)$, denote $\pi_i \in \Gamma(\mu_i,\hat{\nu})$ the optimal transport plan between $\hat{\nu}$ and $\mu_i$.
Using disintegration theorem, for any $1 \leq i \leq J$, there exists a (conditional) measure $\mu_i^y$ defined for $\hat{\nu}-$almost any $y$, which satisfies $\pi_i(x,y) = \mu_i^y(x)\otimes \hat{\nu}(y)$.
Set then,
\[
\theta(x,y)=\mu_1^y(x_1)\otimes ... \otimes \mu_J^y(x_J) \otimes\hat{\nu}(y),
\]
and denote $\hat{\gamma}$ the law of the $J$ first marginals of $\theta$. Then, by construction of $\theta$,
\begin{align}
\sum_{1 \leq i \leq J} \lambda_i W_p^p (\mu_i,\hat{\nu}) = & \sum_{1 \leq i \leq J} \lambda_i \int d^p(x_i,y)d\theta(x,y)\nonumber\\
 = &  \int \sum_{1 \leq i \leq J} \lambda_i d^p(x_i,y)d\theta(x,y)\nonumber\\
 \geq &  \int \inf_{z \in E} \sum_{1 \leq i \leq J} \lambda_i d^p(x_i,z)d\theta(x,y)\label{ssi_T}\\
= &  \int \sum_{1 \leq i \leq J} \lambda_i d^p(x_i,T(x))d\theta(x,y)\nonumber\\
= &  \int \sum_{1 \leq i \leq J} \lambda_i d^p(x_i,T(x))d\hat{\gamma}(x)\nonumber\\
\geq &  \int \sum_{1 \leq i \leq J} \lambda_i d^p(x_i,T(x))d\gamma(x)\nonumber\\
\geq & \sum_{1 \leq i \leq J} \lambda_i W_p^p (\mu_i,\nu),\nonumber
\end{align}
where the last inequality is an application of (\ref{nu_opt}).

Since $\hat{\nu}$ is arbitrary, we have just shown that $T_\#\gamma$ is a barycenter.

Also, taking $\hat{\nu}$ a barycenter, (\ref{ssi_T}) becomes an equality, so that for $\theta-$almost any $(x,y)\in E^J \times E$,
\[
\sum_{1 \leq i \leq J} \lambda_i d^p(x_i,y) =\inf_{z \in E} \sum_{1 \leq i \leq J} \lambda_i d^p(x_i,z)=\sum_{1 \leq i \leq J} \lambda_i d^p(x_i,T(x)),
\]
and thus, if the barycenter application $T$ is unique, $T(x)=y, \theta-$almost surely and so $T_\# \hat{\gamma}=\hat{\nu}$. 
Also, optimality of $\hat{\nu}$, and (\ref{nu_opt}) show that $\hat{\gamma}$ is a solution of the multi-marginal problem.

\end{proof}

\begin{proof}[Proof of Theorem~\ref{prop}]
Denotes $\mu_j$ a barycenter of $\mathbb{P}_j$.
The proof is in three steps.
\begin{enumerate}
\item Proving the tightness of the sequence of the barycenters  $(\mu_j)_{j\geq 1}$.
\item Proving that any limit $\mu$ of $(\mu_j)_{j\geq 1}$ (in the sense of the weak convergence of measures) is a barycenter.
\item Proving that there exists $\nu \in \mathcal{W}_p(E)$ such that $W_p(\nu,\mu_j)\rightarrow W_p(\nu,\mu)$. The conclusion of the proof will be derived from Lemma~\ref{lem:cvW}.
\end{enumerate}
Let $\tilde\mu$ and $\tilde\mu_j$ random measures with distribution respectively $\mathbb{P}$ and $\mathbb{P}_j$.

\begin{enumerate}
\item First prove that the moments of order $p$ of the random measures considered as random variables $\mu_j$ can be bounded from above by a constant  $M<\infty$.

Let $\tilde\mu_j$ be a random measure drawn according to a  distribution  $\mathbb{P}_j$. Then, for any $x\in E$ 
\begin{align*}
W_p(\mu_j,\delta_x) &=W_p(\delta_{\mu_j},\delta_{\delta_x})\\
&\leq W_p(\delta_{\mu_j},\mathbb{P}_j) + W_p( \mathbb{P}_j,\delta_{\delta_x})\\
&= \left(EW_p^p(\mu_j,\tilde\mu_j)\right)^{1/p}+ \left(EW_p^p(\tilde\mu_j,\delta_x)\right)^{1/p}\\
&\leq 2 \left(EW_p^p(\tilde\mu_j,\delta_x)\right)^{1/p} \text{ since $\mu_j$ is a minimizer of $\nu \mapsto EW_p^p(\nu,\tilde\mu_j)$}\\
&= 2 W_p(\mathbb{P}_j,\delta_{\delta_x})\\
&\leq 2 \left( W_p(\mathbb{P}_j,\mathbb{P}) + W_p(\mathbb{P},\delta_{\delta_x})\right) \leq M <\infty \text{ since } W_p(\mathbb{P},\mathbb{P}_j)\rightarrow 0.
\end{align*}
Denote $B(x,r)$ the ball of $E$ centered in  $x$ with radius $r$. Then Markov's inequality entails that 
\[
\mu_j(B(x,r)^c) \leq \frac{E_{\mu_j}d^p(X,x)}{r^p} = \frac{W_p^p(\mu_j,\delta_x)}{r^p} \leq \frac{M^p}{r^p}.
\]
The compactness of the balls of  $E$ entails that the sequence $(\mu_j)_{j\geq 1}$ is tight. So it can be extracted a sequence which converges towards a distribution that will be denoted $\mu$. For ease of notations, the subsequence will be denoted as the initial sequence.

\item Let $\nu \in \mathcal{W}_p(E)$ and $\tilde\mu$ a random measure with distribution  $\mathbb{P}$. We get
\begin{align}
EW_p^p(\nu,\tilde\mu) &= W_p^p(\delta_\nu,\mathbb{P})\nonumber\\ 
&=  \lim_{j\rightarrow \infty}  W_p^p(\delta_\nu,\mathbb{P}_j)\text{ since $W_p(\mathbb{P}_j,\mathbb{P}) \rightarrow 0$}\nonumber\\
&= \lim_{j\rightarrow \infty} EW_p^p(\nu,\tilde\mu_j)\nonumber\\
& \geq \lim_{j \rightarrow \infty} EW_p^p(\mu_j,\tilde\mu_j) \text{ since $\mu_j$ is a barycenter}\label{eq:egalite}\\
& \geq E\liminf_{j\rightarrow \infty} W_p^p(\mu_j,\tilde\mu_j) \text{ using Fatou's Lemma for any coupling of the  $\tilde\mu_j$'s}\nonumber\\
& \geq EW_p^p(\mu,\tilde\mu) \text{ since $W_p$ is lower semi-continuous}.\nonumber
\end{align}
For the last inequality, we used that since $\mathbb{P}_j \rightarrow \mathbb{P}$, Skorokhod's representation theorem enables to build  $\tilde\mu_j\rightarrow \tilde\mu$ a.s..
This proves that  $\mu$ is a barycenter of $\mathbb{P}$.

\item For $\nu=\mu$, the inequality~(\ref{eq:egalite}) is in fact an equality which implies that
\[
W_p(\delta_{\mu_j},\mathbb{P}_j) \rightarrow W_p(\delta_{\mu},\mathbb{P}).
\]
Hence
\[
W_p(\delta_{\mu_j},\mathbb{P}) - W_p(\delta_{\mu},\mathbb{P}) \leq W_p(\delta_{\mu_j},\mathbb{P}_j) + W_p(\mathbb{P}_j,\mathbb{P}) - W_p(\delta_{\mu},\mathbb{P}) \rightarrow 0.
\]
This implies that
\begin{align*}
EW_p^p(\mu,\tilde\mu)&=W_p(\delta_{\mu},\mathbb{P})\\
&=\lim_{j\rightarrow \infty} W_p(\delta_{\mu_j},\mathbb{P})\\
&= \lim_{j\rightarrow \infty} EW_p^p(\mu_j,\tilde\mu)\\
& \geq E \liminf_{j\rightarrow \infty} W_p^p(\mu_j,\tilde\mu) \text{ using Fatou's Lemma}\\
& \geq EW_p^p(\mu,\tilde\mu) \text{ using again semi-lower continuity of  $W_p$ for weak convergence.}
\end{align*}
So $\mathbb{P}$-a.s, (since $ \liminf W_p(\mu_j,\tilde\mu)\geq W_p(\mu,\tilde\mu)$)
\[
\liminf W_p(\mu_j,\tilde\mu) = W_p(\mu,\tilde\mu).
\]
So all along a subsequence and for a $\nu \in \mathcal{W}(E)$,  $W_p(\mu_j,\nu) \rightarrow W_p(\mu,\nu)$. So using Lemma~\ref{lem:cvW}, we get that
\[
W_p(\mu_j,\mu)\rightarrow 0,
\]
which concludes the proof.
\end{enumerate}

\end{proof}

\section{Technical Lemmas} \label{s:lemma}

The following five results are well known.
They are recalled here for the purpose of clarity of the proofs.

\begin{lemma}[Consistency in  $L^1$]\label{lem:cvL1}
Let $(X_n)_{n\geq 1}$ be a sequence of real valued random variables such that 
\begin{align*}
X_n\rightarrow X \text{ a.s.}\\
\mathbb{E}|X_n|\rightarrow \mathbb{E}|X|.
\end{align*}
Then, $X_n\rightarrow X$ in $L^1$.
\end{lemma}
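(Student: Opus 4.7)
The plan is to apply Fatou's lemma to a well-chosen nonnegative auxiliary sequence built from $|X_n|$, $|X|$ and $|X_n-X|$, so that the hypothesis $\mathbb{E}|X_n| \to \mathbb{E}|X|$ collapses the slack in Fatou's inequality and forces $\mathbb{E}|X_n-X| \to 0$. This is the classical Scheffé-type trick.

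First I would set
\[
Y_n := |X_n| + |X| - |X_n - X|.
\]
The triangle inequality gives $Y_n \geq 0$, and the almost sure convergence $X_n \to X$ gives $|X_n - X| \to 0$ and $|X_n| \to |X|$ almost surely, so that $Y_n \to 2|X|$ almost surely. Note that $\mathbb{E}|X| < \infty$ is implicitly contained in the hypothesis $\mathbb{E}|X_n| \to \mathbb{E}|X|$ (the limit of finite real numbers is finite), so every expectation appearing below is finite and any subtraction is legitimate.

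Next I would apply Fatou's lemma to the nonnegative sequence $(Y_n)$:
\[
2\,\mathbb{E}|X| \;=\; \mathbb{E}\bigl[\liminf_{n} Y_n\bigr] \;\leq\; \liminf_{n} \mathbb{E}[Y_n] \;=\; \lim_{n} \mathbb{E}|X_n| + \mathbb{E}|X| - \limsup_{n} \mathbb{E}|X_n - X| \;=\; 2\,\mathbb{E}|X| - \limsup_{n} \mathbb{E}|X_n - X|,
\]
where I used the hypothesis $\mathbb{E}|X_n| \to \mathbb{E}|X|$ and the identity $\liminf(a_n - b_n) = \lim a_n - \limsup b_n$ when $a_n$ converges. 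Subtracting $2\,\mathbb{E}|X|$ on both sides yields $\limsup_n \mathbb{E}|X_n - X| \leq 0$; combined with $\mathbb{E}|X_n - X| \geq 0$, this gives $\mathbb{E}|X_n - X| \to 0$, which is the desired $L^1$ convergence.

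There is essentially no obstacle here: the only mild subtlety is ensuring that $\mathbb{E}|X| < \infty$ so that one may legally subtract it, but this is automatic from the stated hypothesis. The argument does not require any integrability dominations beyond what the triangle inequality already supplies through $Y_n \geq 0$.
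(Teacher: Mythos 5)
Your proof is correct: the Scheff\'e-type trick of applying Fatou's lemma to the nonnegative sequence $Y_n = |X_n| + |X| - |X_n - X|$ is the classical argument, and your bookkeeping is sound --- in particular, finiteness of $\mathbb{E}|X|$ and $\mathbb{E}|X_n|$ makes $\mathbb{E}|X_n - X| \leq \mathbb{E}|X_n| + \mathbb{E}|X|$ finite, so the linear splitting of $\mathbb{E}[Y_n]$ and the final subtraction are both legitimate, and the identity $\liminf_n(a_n - b_n) = \lim_n a_n - \limsup_n b_n$ is correctly invoked since $\mathbb{E}|X_n| + \mathbb{E}|X|$ converges. Note that the paper itself states this lemma without proof, listing it among five results recalled as ``well known,'' so there is no proof to compare against; your argument is exactly the standard one the authors implicitly rely on.
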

\begin{lemma}[Uniform integrability]\label{lem:UI}
A family of real valued random variables  $\mathcal{H}$ is uniformly integrable  (in the sense that  $\sup_{X\in \mathcal{H}} \int_{\{|X|>a\}}|X|d\mathbb{P}\rightarrow 0$ as $a\rightarrow +\infty$) if and only if the two following conditions hold 

i) $\sup_{X \in \mathcal{H}} E|X| <\infty$ (bounded in $L^1$)

ii) $\forall \varepsilon>0, \exists \alpha>0$ such that $\forall A\in\mathcal{A}, \left(\mathbb{P}(A)<\alpha \implies \sup_{X\in\mathcal{H}} \int_A|X|d\mathbb{P}<\varepsilon\right)$ (equicontinuity).

\end{lemma}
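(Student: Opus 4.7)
The statement is the classical characterization of uniform integrability as the conjunction of an $L^1$ bound and an absolutely-continuous-integral condition. The plan is to prove both implications directly, with the only nontrivial input being Markov's inequality in the reverse direction.

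For the forward direction (uniform integrability implies (i) and (ii)), I fix $\varepsilon=1$ and use the defining property of uniform integrability to pick $a_0$ so that $\sup_{X\in\mathcal{H}}\int_{\{|X|>a_0\}}|X|\,d\mathbb{P}\leq 1$. Splitting the integral of $|X|$ over $\{|X|\leq a_0\}$ and $\{|X|>a_0\}$ then yields $\mathbb{E}|X|\leq a_0+1$ uniformly in $X$, giving (i). For (ii), given $\varepsilon>0$, use uniform integrability again to find $a=a(\varepsilon)$ with $\sup_{X\in\mathcal{H}}\int_{\{|X|>a\}}|X|\,d\mathbb{P}<\varepsilon/2$, and set $\alpha=\varepsilon/(2a)$. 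For any $A\in\mathcal{A}$ with $\mathbb{P}(A)<\alpha$ and any $X\in\mathcal{H}$,
\[
\int_A|X|\,d\mathbb{P}=\int_{A\cap\{|X|\leq a\}}|X|\,d\mathbb{P}+\int_{A\cap\{|X|>a\}}|X|\,d\mathbb{P}\leq a\,\mathbb{P}(A)+\frac{\varepsilon}{2}<\varepsilon,
\]
which establishes (ii).

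For the reverse direction, let $M=\sup_{X\in\mathcal{H}}\mathbb{E}|X|<\infty$ by (i). Fix $\varepsilon>0$ and let $\alpha=\alpha(\varepsilon)>0$ be provided by (ii). By Markov's inequality, $\mathbb{P}(|X|>a)\leq \mathbb{E}|X|/a\leq M/a$ for every $X\in\mathcal{H}$. Choose $a>M/\alpha$, so that $\mathbb{P}(|X|>a)<\alpha$ uniformly in $X\in\mathcal{H}$. Applying (ii) to the set $A=\{|X|>a\}$ (which depends on $X$ but has measure less than $\alpha$ for every $X$) gives $\int_{\{|X|>a\}}|X|\,d\mathbb{P}<\varepsilon$ uniformly in $X$, which is exactly uniform integrability.

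There is no real obstacle to overcome; the only subtlety is that in the reverse direction the sets $\{|X|>a\}$ vary with $X$, so one must verify that condition (ii) is genuinely uniform in $A$ (which it is, as the statement quantifies over all $A\in\mathcal{A}$). The argument uses nothing beyond Markov's inequality and elementary splitting of integrals.
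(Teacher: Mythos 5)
Your proof is correct: both directions are the standard argument (splitting the integral at level $a$ for the forward direction, Markov's inequality plus the uniformity of condition (ii) over all $A\in\mathcal{A}$ for the reverse), and you correctly handle the subtlety that the sets $\{|X|>a\}$ vary with $X$. Note that the paper itself gives no proof of this lemma --- it is listed among results ``recalled here for the purpose of clarity of the proofs'' as classical facts --- so there is nothing to compare against; your argument is exactly the textbook proof the authors implicitly invoke.
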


\begin{lemma}[Consistency in  $L^1$ and uniform integrability ]\label{lem:UIL1}
Let $X_n\rightarrow X$ in probability, then 
the sequence $(X_n)_{n\geq 1}$ is uniformly integrable if and only if  $X_n\rightarrow X$ in $L^1$.
\end{lemma}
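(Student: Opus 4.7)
The plan is to use Lemma~\ref{lem:UI} as the working characterization of uniform integrability (boundedness in $L^1$ plus equicontinuity) and treat the two implications separately via truncation arguments. Throughout, Lemma~\ref{lem:cvL1} is not needed; we work directly with the $L^1$-norm.

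For the implication ``$L^1$ convergence $\Rightarrow$ uniform integrability'', $L^1$-boundedness is immediate from the triangle inequality $\mathbb{E}|X_n| \leq \mathbb{E}|X| + \|X_n - X\|_1$. For the equicontinuity condition, given $\varepsilon > 0$, I would first choose $N$ so that $\|X_n - X\|_1 < \varepsilon/2$ for $n \geq N$. Each of the finitely many integrable random variables $X, X_1, \ldots, X_{N-1}$ is individually equicontinuous (a standard consequence of the dominated convergence theorem applied to $|Y|\mathbf{1}_{\{|Y| > k\}}$), so one can select a common $\alpha > 0$ that handles this finite list with tolerance $\varepsilon$, and also ensures $\int_A |X|\,d\mathbb{P} < \varepsilon/2$ for $\mathbb{P}(A) < \alpha$. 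The tail $n \geq N$ is then dealt with by $\int_A |X_n|\,d\mathbb{P} \leq \int_A |X|\,d\mathbb{P} + \|X_n - X\|_1 < \varepsilon$.

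For the reverse implication, I would first extract a subsequence converging almost surely from $X_n \to X$ in probability, and combine Fatou's lemma with the uniform $L^1$-bound supplied by uniform integrability to conclude $X \in L^1$. For $K > 0$, introduce the $1$-Lipschitz truncation $\phi_K(x) = (x \wedge K) \vee (-K)$ and split
\[
\mathbb{E}|X_n - X| \leq \mathbb{E}|X_n - \phi_K(X_n)| + \mathbb{E}|\phi_K(X_n) - \phi_K(X)| + \mathbb{E}|\phi_K(X) - X|.
\]
The first term is controlled uniformly in $n$ by uniform integrability at level $K$, the third is controlled by dominated convergence as $K \to \infty$, and the middle term is handled by the elementary estimate $\mathbb{E}|\phi_K(X_n) - \phi_K(X)| \leq 2K\,\mathbb{P}(|X_n - X| > \eta) + \eta$, which vanishes by first letting $n \to \infty$ and then $\eta \to 0$.

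The main delicate point is avoiding circularity in the $\Rightarrow$ direction: I must not invoke ``uniform integrability plus convergence in probability implies $L^1$ convergence'' to handle the truncated sequences, since that is precisely the statement being proved. The resolution is the explicit bounded estimate above, which uses only the uniform pointwise bound by $K$ and convergence in probability, and in particular does not invoke uniform integrability of the truncated sequence.
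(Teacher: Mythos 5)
Your proof is correct, but there is nothing in the paper to compare it against: Lemma~\ref{lem:UIL1} is listed in Section~\ref{s:lemma} among the ``well known'' results recalled without proof, so you have supplied an argument where the paper gives none. Both directions of your argument are sound. The forward direction is precisely a verification of conditions i) and ii) of Lemma~\ref{lem:UI}: the $L^1$-bound is immediate, and the equicontinuity step correctly combines absolute continuity of the integral for the finite block $X, X_1,\dots,X_{N-1}$ with the tail estimate $\int_A |X_n|\,d\mathbb{P} \leq \int_A |X|\,d\mathbb{P} + \|X_n - X\|_1$. In the reverse direction, your truncation decomposition works, and the bound $\mathbb{E}|\phi_K(X_n)-\phi_K(X)| \leq 2K\,\mathbb{P}(|X_n-X|>\eta)+\eta$ uses only the uniform bound by $K$ and the Lipschitz property, so the circularity you flag is genuinely avoided; your Fatou step along an a.s.\ subsequence is also the right way to secure $X\in L^1$ before invoking dominated convergence for the third term. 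As a point of comparison with the paper's toolkit: a slightly shorter route for the reverse direction, in the spirit of how Lemma~\ref{lem:UI} is deployed in the proof of Lemma~\ref{lem:cvW}, is to observe that $(|X_n - X|)_{n\geq 1}$ is itself uniformly integrable (since $X\in L^1$ and uniform integrability is stable under adding a fixed integrable variable), split $\mathbb{E}|X_n - X|$ over the events $\{|X_n-X|\leq \eta\}$ and $\{|X_n-X|>\eta\}$, and apply equicontinuity to the second event, whose probability tends to $0$; this dispenses with truncation entirely. Either way, your proposal is complete and correct.
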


\begin{lemma}[Tightness of fixed marginals set of measures]\label{compacit_marginales}
Let $(E,d)$ be a Polish space (i.e. a complete separable metric space). Let $C_1, ..., C_J$ be compacts sets of $\mathcal{W}_p(E)$. Then, the set $\Gamma(C_1, ..., C_J)$ defined as the set of probability measures on $E^J$ with marginals respectively in $C_1,...,C_J$, is compact.

\end{lemma}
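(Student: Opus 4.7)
The plan is to show that $\Gamma(C_1,\ldots,C_J)$ is both relatively compact and closed in $(\mathcal{W}_p(E^J), W_p)$, where $E^J$ is endowed with a product metric such as $d_{E^J}^p(x,y) = \sum_{i=1}^J d^p(x_i,y_i)$. I will rely on the standard characterization (see e.g.\ Proposition 7.1.5 of Ambrosio--Gigli--Savar\'e) that a subset of $\mathcal{W}_p(F)$ over a Polish space $F$ is relatively compact if and only if it is tight and has uniformly integrable $p$-th moments.

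First I would prove tightness. Each $C_i$ is compact in $\mathcal{W}_p(E)$, hence weakly compact, hence tight by Prokhorov's theorem. Given $\varepsilon>0$ choose compact $K_i\subset E$ with $\mu(K_i^c)\le \varepsilon/J$ for every $\mu\in C_i$. Then $K_1\times\cdots\times K_J$ is compact in $E^J$, and for any $\pi\in\Gamma(C_1,\ldots,C_J)$ with marginals $\mu_i\in C_i$ the subadditivity bound
\[
\pi\bigl((K_1\times\cdots\times K_J)^c\bigr)\le \sum_{i=1}^J \mu_i(K_i^c)\le \varepsilon
\]
gives tightness of the whole family.

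Second I would check uniform integrability of the $p$-moment. Fix $x_0=(x_0^1,\ldots,x_0^J)\in E^J$. By compactness of $C_i$ in $\mathcal{W}_p(E)$ we have
\[
\lim_{R\to\infty}\sup_{\mu\in C_i}\int_{d(x_i,x_0^i)\ge R} d^p(x_i,x_0^i)\,d\mu(x_i)=0.
\]
With the chosen product metric, $\int d_{E^J}^p(x,x_0)\,d\pi = \sum_i \int d^p(x_i,x_0^i)\,d\mu_i$, and the event $\{d_{E^J}(x,x_0)\ge R\}$ is contained in $\bigcup_i\{d(x_i,x_0^i)\ge R/J^{1/p}\}$, so the tail integral is dominated by a finite sum of the corresponding tails of the marginals. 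Taking suprema and letting $R\to\infty$ yields the required uniform integrability.

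Finally I would establish closedness: the projection $\mathcal{W}_p(E^J)\to\mathcal{W}_p(E)$ onto the $i$-th marginal is $1$-Lipschitz for $W_p$, so any $W_p$-limit $\pi$ of a sequence $\pi_n\in\Gamma(C_1,\ldots,C_J)$ has marginals that are $W_p$-limits of points in the closed set $C_i$, hence still lie in $C_i$. Combined with relative compactness this yields compactness. The routine but only slightly delicate point will be the bookkeeping in the second step — keeping the constants coming from the chosen product metric consistent so that uniform integrability on $E^J$ really follows from uniform integrability on each factor; everything else reduces to Prokhorov and the standard characterization of $W_p$-compactness.
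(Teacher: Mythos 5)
The paper never proves this lemma --- it is listed among the results ``recalled'' as well known in Section~5 --- so there is no in-paper argument to compare against, and your job was to supply a proof from scratch. Your overall strategy (tightness plus uniform integrability of $p$-th moments plus closedness, in $\mathcal{W}_p(E^J)$ with the product metric $d_{E^J}^p(x,y)=\sum_i d^p(x_i,y_i)$) is the standard and correct one, and two of your three steps are fine as written: uniform tightness of each $C_i$ follows from Prokhorov on a Polish space and your union bound is exact, and the marginal projections are indeed $1$-Lipschitz for $W_p$ (push a coupling forward coordinatewise and use $d^p(x_i,y_i)\le d_{E^J}^p(x,y)$), so the marginal constraints pass to the limit. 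Note also that your conclusion is stronger than what the paper needs: in Theorem~2 the functional has a continuous nonnegative integrand, so weak compactness (your step 1 alone, with closedness in the weak topology) would already suffice.

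The genuine gap is in step 2. The inclusion $\{d_{E^J}(x,x_0)\ge R\}\subset\bigcup_i\{d(x_i,x_0^i)\ge R/J^{1/p}\}$ is correct, but expanding the integrand gives
\[
\int_{\{d_{E^J}(x,x_0)\ge R\}} d_{E^J}^p(x,x_0)\,d\pi \;\le\; \sum_{i,j}\int_{\{d(x_i,x_0^i)\ge R/J^{1/p}\}} d^p(x_j,x_0^j)\,d\pi,
\]
and the off-diagonal terms $i\neq j$ are \emph{not} tails of any marginal: they integrate the $j$-th coordinate over an event defined by the $i$-th coordinate and depend on the joint law $\pi$, not just on $\mu_1,\dots,\mu_J$. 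So ``dominated by a finite sum of the corresponding tails of the marginals'' is false as stated, and the delicate point is not the constants, as your closing sentence suggests, but these cross terms. They are repairable: by Markov, $\pi\bigl(d(x_i,x_0^i)\ge S\bigr)\le S^{-p}\sup_{\mu\in C_i}\int d^p(\cdot,x_0^i)\,d\mu\to 0$ uniformly over $\Gamma(C_1,\dots,C_J)$, while compactness of $C_j$ gives equi-integrability of $d^p(x_j,x_0^j)$ uniformly in $\pi$ (split at a level $T$: $\int_A d^p(x_j,x_0^j)\,d\pi\le \sup_{\mu\in C_j}\int_{\{d(\cdot,x_0^j)\ge T\}}d^p(\cdot,x_0^j)\,d\mu + T^p\,\pi(A)$), and combining the two estimates kills each cross term. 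Alternatively --- and more in the spirit of your own choice of metric --- you can bypass uniform integrability entirely: since $\int d_{E^J}^p(\cdot,x_0)\,d\pi=\sum_i\int d^p(\cdot,x_0^i)\,d\mu_i$ depends only on the marginals, take any sequence $\pi_n\in\Gamma(C_1,\dots,C_J)$, extract so that each marginal $\mu_i^n\to\mu_i\in C_i$ in $W_p$, use your step 1 to extract a further weakly convergent subsequence $\pi_n\to\pi$ (whose marginals are then the $\mu_i$), and observe that the $p$-th moments of $\pi_n$ converge to that of $\pi$ because the marginal moments do; weak convergence together with convergence of $p$-th moments yields $W_p(\pi_n,\pi)\to 0$ by exactly the characterization the paper cites (theorem 6.9 of Villani, or proposition 7.1.5 of Ambrosio--Gigli--Savar\'e).
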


\begin{proposition}[Approximation by finitely supported measures] \label{prop2}
For all $\mathbb{P}$ there is a sequence of finitely supported probabilities $\mathbb{P}_j$ such that $$ W_p(\mathbb{P}_j,\mathbb{P}) \longrightarrow 0.$$
\end{proposition}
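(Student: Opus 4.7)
The plan is to exploit that $(\mathcal{W}_p(E),W_p)$ is Polish (separable and complete, since $(E,d)$ is), so every $\mathbb{P}\in\mathcal{W}_p(\mathcal{W}_p(E))$ is tight; the $\mathbb{P}_j$ will then be obtained by grouping the mass of $\mathbb{P}$ into finitely many cells of small $W_p$-diameter and collapsing each cell to a single representative. The Wasserstein error will be controlled by an explicit coupling.

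First I would fix a reference $\nu_0\in\mathcal{W}_p(E)$ (so that $\int W_p^p(\nu,\nu_0)\,d\mathbb{P}(\nu)<\infty$), and for each $n\geq 1$ choose a compact $K_n\subset\mathcal{W}_p(E)$ with $\mathbb{P}(K_n^c)$ small; by absolute continuity of the integral, $K_n$ can moreover be taken such that
\[
\int_{K_n^c} W_p^p(\nu,\nu_0)\,d\mathbb{P}(\nu)<\frac{1}{n}.
\]
Next, I would use total boundedness of $K_n$ to cover it by finitely many open $W_p$-balls $B(\nu_k^n,1/n)$, $k=1,\dots,m_n$, disjointify them inside $K_n$ to obtain Borel cells $A_1^n,\dots,A_{m_n}^n$ of $W_p$-diameter at most $2/n$, set $A_0^n:=\mathcal{W}_p(E)\setminus K_n$ and $\nu_0^n:=\nu_0$, and define the finitely supported measure
\[
\mathbb{P}_n := \mathbb{P}(A_0^n)\,\delta_{\nu_0} + \sum_{k=1}^{m_n}\mathbb{P}(A_k^n)\,\delta_{\nu_k^n}.
\]

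The final step is to bound $W_p(\mathbb{P},\mathbb{P}_n)$ by exhibiting the explicit coupling
\[
\pi_n := \sum_{k=0}^{m_n}\mathbb{P}|_{A_k^n}\otimes\delta_{\nu_k^n}\;\in\;\Gamma(\mathbb{P},\mathbb{P}_n),
\]
whose two marginals are $\mathbb{P}$ and $\mathbb{P}_n$ by construction. Integrating $W_p^p$ against $\pi_n$ and splitting the cells inside $K_n$ (diameter $\leq 2/n$, but containing $\nu_k^n$, so $W_p(\nu,\nu_k^n)\leq 1/n$) from the tail $A_0^n$ yields
\[
W_p^p(\mathbb{P},\mathbb{P}_n)\leq \frac{1}{n^p}\,\mathbb{P}(K_n)+\int_{K_n^c}W_p^p(\nu,\nu_0)\,d\mathbb{P}(\nu)\leq \frac{1}{n^p}+\frac{1}{n},
\]
which tends to $0$ as $n\to\infty$, finishing the proof.

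The step I expect to be the main obstacle is the tail bookkeeping on $K_n^c$: inside $K_n$ the diameter estimate gives the cheap factor $(1/n)^p$, but on the tail one has to pay the full transport cost $W_p^p(\nu,\nu_0)$, and it is precisely the $p$-th moment integrability of $\mathbb{P}$ (via absolute continuity of the integral) that makes this tail contribution arbitrarily small. Aside from this, the Borel measurability of the cells $A_k^n$, the verification that $\pi_n$ has the correct marginals, and the finite-sum nature of $\mathbb{P}_n$ are routine.
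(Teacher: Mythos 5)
Your proof is correct, and it is worth noting that the paper itself offers nothing to compare it against: Proposition~\ref{prop2} appears in Section~\ref{s:lemma} among five results declared ``well known'' and is stated without proof. Your argument supplies the standard one, and every step is sound: $(\mathcal{W}_p(E),W_p)$ is Polish since $(E,d)$ is, so Ulam's theorem gives tightness of $\mathbb{P}$; absolute continuity of the integral of the $\mathbb{P}$-integrable function $\nu\mapsto W_p^p(\nu,\nu_0)$ lets you choose $K_n$ so that the tail integral is below $1/n$; total boundedness of $K_n$ gives the finite cover; and the product-type coupling $\pi_n=\sum_{k=0}^{m_n}\mathbb{P}|_{A_k^n}\otimes\delta_{\nu_k^n}$ has the right marginals and yields $W_p^p(\mathbb{P},\mathbb{P}_n)\leq n^{-p}\,\mathbb{P}(K_n)+n^{-1}$. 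One cosmetic slip: after disjointification the cell $A_k^n$ need not \emph{contain} its center $\nu_k^n$ (the center may have been absorbed by an earlier cell), but this is immaterial, since $A_k^n\subset B(\nu_k^n,1/n)$ is what you actually use to get $W_p(\nu,\nu_k^n)\leq 1/n$ for $\nu\in A_k^n$. You correctly identified the tail term on $K_n^c$ as the only place where the moment condition $\mathbb{P}\in\mathcal{W}_p(\mathcal{W}_p(E))$ is needed; indeed your proof uses nothing about $\mathcal{W}_p(E)$ beyond its being Polish, so it establishes the density of finitely supported measures in $\mathcal{W}_p(X)$ for any Polish $X$, which is precisely the generality in which the result is classically known.
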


Here is a lemma used for the proof of Theorem~\ref{prop}.

\begin{lemma}\label{lem:cvW}
Let $(\mu_n)_{n\geq 1}$ be a sequence of measures on a Polish space $(E,d)$ which converges weakly towards  $\mu$. If there exists a measure  $\nu$ such that 
\[
W_p(\mu_n,\nu)\rightarrow W_p(\mu,\nu),
\]
then
\begin{equation} \label{conv}
W_p(\mu_n,\mu)\rightarrow 0.
\end{equation}
\end{lemma}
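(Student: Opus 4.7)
The plan is to deduce $W_p(\mu_n,\mu)\to 0$ via the metrization property recalled in Section~\ref{s:def}: convergence in $W_p$ is equivalent to weak convergence together with convergence of $p$-th moments. Weak convergence is given, so the real task is to upgrade the scalar hypothesis $W_p(\mu_n,\nu)\to W_p(\mu,\nu)$ to convergence of $\int d^p(\cdot,x_0)\,d\mu_n$ to $\int d^p(\cdot,x_0)\,d\mu$ for some (equivalently, any) base point $x_0\in E$.

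First I would take optimal transport plans $\pi_n\in\Gamma(\mu_n,\nu)$ realizing $W_p^p(\mu_n,\nu)$. Since $(\mu_n)$ is tight (being weakly convergent) and $\nu$ is fixed, the couplings $(\pi_n)$ have tight marginals and are therefore themselves tight on $E\times E$ by Prokhorov. Along a subsequence $\pi_{n_k}\to\pi^{\star}$ weakly, necessarily with $\pi^{\star}\in\Gamma(\mu,\nu)$. Lower semi-continuity of $\pi\mapsto\int d^p\,d\pi$ together with the hypothesis yields
\[
W_p^p(\mu,\nu)\;\leq\;\int d^p\,d\pi^{\star}\;\leq\;\liminf_{k}\int d^p\,d\pi_{n_k}\;=\;\lim_k W_p^p(\mu_{n_k},\nu)\;=\;W_p^p(\mu,\nu),
\]
so $\pi^{\star}$ is optimal and $\int d^p\,d\pi_{n_k}\to\int d^p\,d\pi^{\star}$.

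Next I would apply Skorokhod's representation to $\pi_{n_k}\to\pi^{\star}$, producing random variables $(X_k,Y_k)\sim\pi_{n_k}$ converging almost surely to $(X,Y)\sim\pi^{\star}$. Then $d^p(X_k,Y_k)\to d^p(X,Y)$ almost surely with convergence of expectations, so Lemma~\ref{lem:UIL1} gives uniform integrability of the family $\{d^p(X_k,Y_k)\}_k$. The pointwise bound
\[
d^p(X_k,x_0)\;\leq\;2^{p-1}\bigl(d^p(X_k,Y_k)+d^p(Y_k,x_0)\bigr),
\]
combined with the observation that $Y_k\sim\nu\in\mathcal{W}_p(E)$ makes $\{d^p(Y_k,x_0)\}_k$ a family with a common integrable law (hence uniformly integrable), transfers uniform integrability to $\{d^p(X_k,x_0)\}_k$ via the sum criterion of Lemma~\ref{lem:UI}. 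With the almost sure convergence $X_k\to X$, Lemma~\ref{lem:UIL1} upgrades this to $L^1$ convergence, i.e.\ $\int d^p(x,x_0)\,d\mu_{n_k}(x)\to\int d^p(x,x_0)\,d\mu(x)$.

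Weak convergence plus convergence of $p$-th moments along $(n_k)$ then yields $W_p(\mu_{n_k},\mu)\to 0$ by the metrization property. Since the very same reasoning applies verbatim to any subsequence of $(\mu_n)$, every subsequence admits a further subsequence converging to $\mu$ in $W_p$, which forces the full convergence $W_p(\mu_n,\mu)\to 0$. The main obstacle I anticipate is precisely the transfer of uniform integrability from the joint variable $d^p(X_k,Y_k)$ controlled by the hypothesis to the marginal variable $d^p(X_k,x_0)$ which governs the $p$-th moment of $\mu_{n_k}$: the fixed marginal $\nu$ is exactly the bridge that makes this transfer possible, and without a fixed reference measure one would have no way to pass from information about a distance in $\mathcal{W}_p$ to uniform control of tails.
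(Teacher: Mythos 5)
Your proof is correct, but it takes a genuinely different route from the paper's. The paper never verifies moment convergence: it invokes the Gluing Lemma to build a three-way coupling $(X_n,Y_n,Z_n)$ of $(\mu_n,\nu,\mu)$ out of the optimal plans between $\mu_n$ and $\nu$ and between $\nu$ and $\mu$, extracts a weak limit of the joint laws, applies Skorokhod to the triples, and then proves directly that $\mathbb{E}d^p(X_n,X)\rightarrow 0$, using uniform integrability of $d^p(X_n,X)$ obtained from the domination $d^p(X_n,X)\leq C_p\left(d^p(X_n,Y_n)+d^p(Y_n,x)+d^p(x,Z_n)\right)$; the conclusion then follows from the coupling bound $W_p^p(\mu_n,\mu)\leq \mathbb{E}d^p(X_n,X)$, with no appeal to the metrization theorem in the general case. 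You instead work only with the two-marginal plans $\pi_n\in\Gamma(\mu_n,\nu)$, identify a subsequential weak limit as an optimal plan by lower semicontinuity, transfer uniform integrability through the fixed marginal $\nu$ to the variables $d^p(X_k,x_0)$, and conclude via the characterization of $W_p$-convergence as weak convergence plus convergence of $p$-th moments --- precisely the fact the paper uses only to dispose of the special case $\nu=\delta_x$ at the outset. Your route buys independence from the Gluing Lemma (a two-marginal tightness argument suffices) and, through the sub-subsequence principle at the end, treats convergence of the full sequence explicitly, a point the paper leaves implicit after its own extraction; the paper's route buys an explicit coupling of $\mu_n$ with $\mu$ and the quantitative estimate $W_p^p(\mu_n,\mu)\leq\mathbb{E}d^p(X_n,X)$ without invoking the metrization property. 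One cosmetic compression in your write-up: to deduce uniform integrability of $\{d^p(X_k,Y_k)\}_k$ from almost sure convergence plus convergence of expectations, you should first pass through Lemma~\ref{lem:cvL1} to get $L^1$ convergence and only then invoke Lemma~\ref{lem:UIL1}, exactly as the paper does for its three terms; this is standard and does not affect correctness.
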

\begin{proof}
Note first that if $\nu=\delta_x$ for a given  $x\in E$, then \eqref{conv} is true, due to the fact that Wasserstein convergence is equivalent to the weak convergence plus convergence of the order $p$ moments (see \cite{villani2008optimal}).

First using the Gluing Lemma (see for instance in \cite{villani2008optimal} or \cite{berti2014gluing}), build three sequences  $(X_n)_{n\geq 1},(Y_n)_{n\geq 1},(Z_n)_{n\geq 1}$ with distribution respectively  $\mu_n$, $\nu$ and $\mu$ such that 
\[
(X_n,Y_n)\sim \pi^1_n,(Y_n,Z_n)\sim \pi^2_n,
\]
where $\pi^1_n$ and  $\pi^2_n$ are the optimal transport maps between respectively $\mu_n$ and $\nu$ and between  $\nu$ and $\mu$.
Let $\Pi_n$ be the distribution of $(X_n,Y_n,Z_n)$. 
Since the three marginals weakly converge, the sequence $(\Pi_n)_{n\geq 1}$ is tight.
Thus, we can extract a subsequence such that  
\[
\Pi_n \rightarrow \Pi \text{ weakly,}
\]
where $\Pi$ has marginal probabilities $\mu,\nu$ and $\mu$. 

Then  Skorokhod's representation Theorem enables to construct a space  $(\Omega,\mathcal{A},\mathbb{P})$ on which there exist  $X,Y,Z$ with joint distribution $\Pi$ and copies of $(X_n,Y_n,Z_n)$ with law $\Pi_n$ such that
\[
d(X_n,X)+d(Y_n,Y)+d(Z_n,Z)\rightarrow 0 \text{ $\mathbb{P}$-a.s..}
\]
If we show that $(d^p(X_n,X))_{n\geq 1}$ is uniformly integrable then using Lemma~\eqref{lem:UIL1}, we get  $$\mathbb{E}d^p(X_n,X)\rightarrow 0,$$ which implies the result since  $W_p(\mu_n,\mu)\leq \mathbb{E}d^p(X_n,X)$.

Uniform integrability remains to be proven. Note that Lemma~\eqref{lem:UI} entails that it is equivalent to prove the two following assumptions 

\begin{enumerate}
\item[i)] $\sup_{n\geq 1} \mathbb{E}d^p(X_n,X) <\infty$ (bounded in $L^1$)

\item[ii)] $\forall \varepsilon>0, \exists \alpha>0$ such that $\forall A\in\mathcal{A}, \left(\mathbb{P}(A)<\alpha \implies \int_Ad^p(X_n,X)d\mathbb{P}<\varepsilon\right)$ (equicontinuity).
\end{enumerate}
Assertion i) is a consequence of - since $\mathbb{E}d^p(X_n,X) \leq \mathbb{E}d^p(X_n,Z_n)$,
\begin{align*}
\mathbb{E}d^p(X_n,X) &\leq C_p\left[\mathbb{E}d^p(X_n,Y_n)+\mathbb{E}d^p(Y_n,x) + \mathbb{E}d^p(x,Z_n)\right] \\
&= C_p\left(W_p^p(\mu_n,\nu) + W_p^p(\nu,\delta_x) + W_p^p(\delta_x,\mu)\right)\\
& \leq M <\infty \text{ since we assumed that  }W_p^p(\mu_n,\nu) \rightarrow W_p^p(\mu,\nu).
\end{align*}
To prove Assertion ii), set $A\in \mathcal{A}$. We have that
\begin{equation}\label{eq:equicont}
\mathbb{E}d^p(X_n,X)\mathbf{1}_{ A} \leq C_p \left[\mathbb{E}d^p(X_n,Y_n)\mathbf{1}_{ A}+\mathbb{E}d^p(Y_n,x)\mathbf{1}_{ A} + \mathbb{E}d^p(x,Z_n)\mathbf{1}_{ A}\right].
\end{equation}
Note that $d^p(X_n,Y_n)$, $d^p(Y_n,x)$ and $d^p(x,Z_n)$ converge towards respectively  $d^p(X,Y)$, $d^p(Y,x)$ and $d^p(x,Z)$ a.s. Their $L^1$ norm converge also, for the first term by assumption and since  $Y_n$ and $Z_n$ are identically distributed, for all $n\geq 1$. Hence using  Lemma~\ref{lem:cvL1} they converge in $L^1$ and thus are equicontinuous sequences. Hence this implies that for all  $\varepsilon>0$, there exists $\alpha>0$ such that the three terms $$\mathbb{E}d^p(X_n,Y_n)\mathbf{1}_{ A}+\mathbb{E}d^p(Y_n,x)\mathbf{1}_{ A} + \mathbb{E}d^p(x,Z_n)\mathbf{1}_{ A}<3\varepsilon$$ for any $A$ such that $\mathbb{P}(A)<\alpha$. 

Hence inequality \eqref{eq:equicont} implies that $(d^p(X_n,X))_{n\geq 1}$ is equicontinuous. Since it is also bounded in  $L^1$, this sequence is uniformly integrable, which proves the result.
\end{proof}

\bibliographystyle{plain}
\bibliography{bibli}
\end{document}